\newcommand{\om}{\omega}
\newcommand{\si}{\sigma}
\newcommand{\ba}{\mathcal{G}}
\newcommand{\fg}{\mathfrak g}
\newcommand{\fp}{\mathfrak p}
\newcommand{\fk}{\mathfrak k}
\newcommand{\id}{{\rm id}}
\newcommand{\C}{\mathbb{C}}
\theoremstyle{plain}
\theoremstyle{plain}
\newtheorem{thm*}{Theorem}[section]
\newtheorem{prop*}[thm*]{Proposition}
\newtheorem{lem*}[thm*]{Lemma}
\newtheorem{cor*}[thm*]{Corollary}
\theoremstyle{definition}
\newtheorem{def*}{Definition}
\theoremstyle{remark}
\newtheorem{rem*}{Remark}
\begin{document}
\title[Fundamental invariants of 2--nondegenerate CR geometries]{Fundamental invariants of 2--nondegenerate CR geometries with simple models}
\author{Jan Gregorovi\v c}
\address{Faculty of Science, University of Hradec Kr\'alov\'e, Rokitansk\'eho 62, Hradec Kr\'alov\'e 50003, Czech Republic and Faculty of Mathematics, University of Vienna, Oskar Morgenstern Platz 1, 1090 Wien, Austria}
 \email{jan.gregorovic@seznam.cz}
\subjclass[2010]{32V40, 32V35, 32V05, 53A55, 53C10}
 \thanks{The author gratefully acknowledges support via Czech Science Foundation (project no. 19-14466Y). The author also acknowledges partial support via FWF}
\maketitle
\begin{abstract}
This article studies the fundamental invariants of  2--nondegenerate CR geometries with simple models. We show that there are two sources of these invariants. The first source is the harmonic curvature of the parabolic geometry that appears (locally) on the leaf space of the Levi kernel. The second source is the difference between the complex structure on the complex tangent space of the CR geometry and the complex structure on the correspondence space to the underlying parabolic geometry. We show that the later fundamental invariants appear only when the model is generic and if they vanish, then the solution of the local equivalence problem of 2--nondegenerate CR geometries with simple models is provided by the Cartan connection of the underlying parabolic geometry. We show that nontrivial examples of CR geometries with the later fundamental invariants can be obtained as deformations of the models.
\end{abstract}

\tableofcontents
\section{Introduction}

A way toward the solution of the problem of the biholomorphic equivalence for real submanifolds in the complex space is to compare the induced CR geometries with appropriate model CR geometries. In the case of the Levi--nondegenerate real hypersurfaces in $\C^N$, the maximally symmetric models are quadrics that can be identified (depending on the signature) with (the CR geometries on) the homogeneous spaces $G/H=SU(p+1,N-p)/P$, where $SU(p+1,N-p)$ is the (real simple) CR automorphism group of the quadric and the stabilizer $P$ is a particular parabolic subgroup of $SU(p+1,N-p)$. The difference between a hypersurface and the model $SU(p+1,N-p)/P$ is measured by invariants, i.e., quantities and tensors that depend only on the hypersurface and the chosen point. Among all the invariants there is a distinguished class of \emph{fundamental invariants} that provide all the necessary invariants for the solution of the equivalence problem using invariant differentiation. The fundamental invariants for $N=2$ were found by Cartan \cite{cartan} and for $N> 2$, by Tanaka \cite{Ta62,Ta70,Ta76} and Chern and Moser \cite{chern}. These invariants have a uniform description as \emph{harmonic curvatures} in the theory of parabolic geometries, see \cite[Section 4.2]{parabook}. The theory of harmonic curvatures of parabolic geometries provides all fundamental invariants of \emph{Levi--nondegenerate} CR submanifolds with semisimple models, c.f. \cite{MS,parabook,SS00,SS12}.

In \cite{Gr19}, we solved an equivalence problem for a class of \emph{Levi--degenerate} real submanifolds in $\C^N$, i.e., CR geometries with non--trivial Levi kernel, that have \emph{maximally symmetric models} with \emph{real simple} CR automorphism groups. We proved in \cite[Theorem 1.1]{Gr19} that the structure of the maximally symmetric models is related to the theory of parabolic geometries. We recall the details about (semi)simple Lie group, their parabolic subgroups, and parabolic geometries in Section \ref{Kap1}. In particular, if the homogeneous space $G/H$ is the maximally symmetric model for the real simple CR automorphism group $G$, then there is a parabolic subgroup $P$ of $G$ such that the leaves of the Levi kernel can be identified with a (noneffective) pseudo--Hermitian symmetric space $P/H$, i.e., $G/P$ is the leaf space of the Levi kernel. Moreover, if we denote by $$(\fg_{-k}\oplus \dots \oplus \fg_{-1})\oplus (\fg_{0}\oplus \dots \oplus \fg_{k})=\fg_-\oplus \fp$$ the grading of the Lie algebra $\fg$ of $G$ that corresponds to $P$, then the Levi kernel can be simultaneously identified:
\begin{enumerate}
\item with the $-1$--eigenspace $\fk$ of symmetries at the origin of the pseudo--Hermitian symmetric space $P/H$,
\item with subset $\fk$ of elements of $\fg_0$ that are \emph{complex antilinear} w.r.t. a complex structure $I$ on $\fg_{-1}$, i.e., the CR geometry is \emph{2--nondegenerate} with \emph{second--order Levi--Tanaka algebra} $(\fg_-,I,\fk)$.
\end{enumerate}
We recall the details about these identifications and the method of how to assign a second--order Levi--Tanaka algebra $(\fg_-,I,\fk)$ to each point of  a Levi--degenerate real submanifolds in $\C^N$ in Section \ref{Kap2}. The second--order Levi--Tanaka algebra $(\fg_-,I,\fk)$ generalizes the usual \emph{Levi--Tanaka algebra} $(\fg_-\oplus \fk,I)$ of a CR geometry by adding data about the complex antilinear action of the Levi kernel $\fk$ on the quotient $\fg_{-1}$ of complex tangent space by the Levi kernel.

In this article, we look on the fundamental invariants of  2--nondegenerate CR geometries $(M,\mathcal{D}, \mathcal{I})$ that have at each point \emph{the same} second--order Levi--Tanaka algebra $(\fg_-,I,\fk)$ corresponding to some maximally symmetric model $G/H$ with simple real $G$. 

We show in Proposition \ref{under} that the relation to parabolic geometries goes beyond models and that we can always find an underlying parabolic geometry of type $(G,P)$. This underlying geometry is \emph{uniquely} determined by the CR geometry with exception of the class of CR geometries that we discuss in Section \ref{KapSp}, where we need to choose the flat underlying parabolic geometry for our further consideration. 

We show in Theorems \ref{fin1} and \ref{fin2} that the underlying parabolic geometry provides a (different) 2--nondegenerate CR geometry $(M,\mathcal{D}, \mathcal{\tilde I})$ on $M$ with second--order Levi--Tanaka algebra $(\fg_-,I,\fk)$. The comparison of $(M,\mathcal{D}, \mathcal{\tilde I})$ with $(M,\mathcal{D},\mathcal{I})$ allows us to characterize all fundamental invariants of 2--nondegenerate CR geometries with simple models. In particular, all fundamental invariants can be derived from the difference $\mathcal{I}-\mathcal{\tilde I}$ and the fundamental invariants of the underlying parabolic geometries the so--called harmonic curvatures. It turns out that several different situations arise depending on the \emph{genericity} of the second--order Levi--Tanaka algebra $(\fg_-,I,\fk)$, i.e., depending on whether the assumption that the  Levi--Tanaka algebra is the same at each point is trivially satisfied on an open neighborhood of each point with  Levi--Tanaka algebra $(\fg_-,I,\fk)$ or not. In particular, if  $(\fg_-,I,\fk)$ is \emph{not} generic then we show in Theorems \ref{fin1} and \ref{fin2} that $\mathcal{\tilde I}=\mathcal{I}$ and the fundamental invariants are only the harmonic curvatures.  We show in Theorem \ref{harm} that not every harmonic curvature can appear as a fundamental invariant of a 2--nondegenerate CR geometry, because some harmonic curvatures provide obstruction for the (formal) integrability of the CR geometry, i.e., they appear only on almost CR geometries that can not be embedded into $\C^N$. We obtain the following classification of fundamental invariants:

\begin{enumerate}
\item The case of five--dimensional 2--nondegenerate CR submanifolds in $\C^3$ has two fundamental invariants. This is the only well--know case, c.f. \cite{Eb06,IZ13,Poc13,KK19}, and we discuss it in the Appendix.
\item The case of seven--dimensional 2--nondegenerate CR submanifold in $\C^{5}$ has two fundamental invariants. We show in Section \ref{genc} (together with explicit example) that one of them is given by the difference $\mathcal{\tilde I}-\mathcal{I}$ and if this invariant vanishes, then the other invariant is the harmonic curvature of the underlying $(2,3,5)$ geometry, i.e., the model has the exceptional Lie group $G_{2(2)}$ as its CR automorphism group.
\item The case of $N^2+N-1$--dimensional $2$--nondegenerate CR submanifolds in $\C^{\frac{N(N+1)}{2}}$ with $(N-1)N$--dimensional Levi--kernel, for $N>2$, has several fundamental invariants induced by the derivations of the difference $\mathcal{\tilde I}-\mathcal{I}$, see Theorem \ref{fin2} and example in Section \ref{KapSp}. If they vanish, then the CR geometry corresponds (locally) to an open subsets in $G/H$. 
\item There is a class of cases described in Theorem \ref{harm} that have the harmonic curvatures of the underlying parabolic geometry as the only fundamental invariants. Let us remark that in the case $\mathfrak{su}(p,4-p)$ we obtained stronger results than in \cite{Por15}, where the author obtains (due to the choice of normalization) the fundamental invariants but not the Cartan connection.
\item The remaining cases admit no fundamental invariants, i.e., the CR geometries correspond (locally) to open subsets in $G/H$. 
\end{enumerate}

We show that in the cases (1) and (3) from the above list, we can in the principle find (locally) \emph{all} 2--nondegenerate CR geometries of that type by deforming the complex structure $\mathcal{\tilde I}$ coming from the maximally symmetric model. However, in the case (3) the PDE's that are imposed by the (formal) integrability on the admissible deformations are too complicated to be solved (by PDE solver in Maple software), see also the example in Section \ref{KapSp}. On the other hand, in the case (1), we provide in Appendix all admissible deformations of the flat model.

\section{Parabolic geometries and gradings of Lie algebras}\label{Kap1}

In this section,  we review the structure theory of the (regular) parabolic geometries from \cite{parabook}.

 Let us recall that each pair of a (semi)simple Lie group $G$ and its parabolic subgroup $P$ is related to a $|k|$--grading of the Lie algebra $\fg$ of $G$, i.e., to a direct sum decomposition $\fg=\fg_{-k}\oplus \dots \oplus \fg_{k}$ such that $[\fg_{i},\fg_{j}]\subset \fg_{i+j}$ with the following properties:

\begin{enumerate}
\item $\fp=\fg_{0}\oplus \dots \oplus \fg_{k}$ is the Lie algebra of $P$ preserving the filtration $$\fg^{i}:=\fg_{i}\oplus \dots \oplus \fg_{k}$$ for $i\leq k$, i.e., $[\fp,\fg^{i}]\subset \fg^{i}.$
\item There is a set of simple roots $\Sigma$ such that root spaces $\fg_\alpha \subset \fg_1$, $\fg_{-\alpha} \subset \fg_{-1}$ and $\fg_\beta\subset \fg_0$ for simple roots $\alpha\in \Sigma, \beta\notin \Sigma$ generate the whole grading. In particular, the $\fg_0$--module $\fg_{-1}$ generates the whole negative part $\fg_-$ of the grading and the length $k$ of the grading can be determined from the highest root.
\item There is a subgroup $G_0$ of $P$ of grading preserving elements with Lie algebra $\fg_0$ and $\exp(\fp_+)$ is the nilradical of $P$ with Lie algebra $\fp_+=\fg_{1}\oplus \dots \oplus \fg_{k}$. Moreover, $P=G_0\rtimes \exp(\fp_+)$.
\end{enumerate}

The parabolic geometries form a class of geometric structures on smooth manifolds that do not have much in common on a first glance. However, the problem of equivalence of (regular) parabolic geometries has a common solution using a (normal) Cartan connection $\tilde \om$ of type $(G,P)$, where $G$ is semisimple Lie group and $P$ is its parabolic subgroup. Let us recall that a Cartan geometry $(\ba\to N,\tilde \om)$ on a smooth manifold $N$ of type $(G,P)$ is

\begin{enumerate}
\item a principal $P$--bundle $\ba$ over $N$,
\item a $\fg$--valued $P$--equivariant absolute parallelism $\tilde \om$ on $\ba$ that reproduces the fundamental vector fields of the $P$--action and provides isomorphism $TN\cong \ba\times_P \fg/\fp.$
\end{enumerate}

The Cartan connection is called normal if the curvature $$\kappa(X,Y):=[X,Y]+\om([\om^{-1}(X),\om^{-1}(X)])$$ for $X,Y\in \fg/\fp$ satisfies $\partial^*\kappa=0$ for the Kostant's codifferential $$\partial^*: \wedge^2(\fg/\fp)^*\otimes \fg\to (\fg/\fp)^*\otimes \fg$$ that is defined for dual bases (w.r.t. Killing form of $\fg$) of $X_i$ of $\fg/\fp$ and $Z_i$ of $\fp_+$ as $$\partial^* (\Omega)(X):= \sum_i 2[Z_i,\Omega(X,X_i)]-\Omega([Z_i,X],X_i).$$

The parabolic geometry is called regular if the curvature $\kappa$ has only components of positive homogeneity w.r.t. the grading, i.e., for $X\in \fg^{i}$ and $Y\in \fg^{j}$, $i,j<0$ is $\kappa(X,Y)\in\fg^{i+j+1}.$ The fundamental invariants of regular parabolic geometries are the harmonic curvatures, i.e., the components of $\kappa$ in $Ker(\partial^*)/Im(\partial^*)$.

The quotients $\fg^{i}/\fp$ define distinguished distributions on $N$ and $G_0$--invariant objects define distinguished geometric structures on $$T^{-1}N:=\ba\times_P \fg^{-1}/\fp.$$ The case of Levi--nondegenrate CR hypersurfaces in $\C^N$ is typical example of regular parabolic geometry (of type $(SU(p+1,N-p),P_{1,N})$ for the set $\Sigma=\{\alpha_1,\alpha_N\}$ of simple roots (with ordering from \cite{parabook})). In this case, $T^{-1}N=\mathcal{D}$ is the complex tangent space and $G_0=CSU(p,N-p-1)$ preserves the complex structure $\mathcal{I}$ on the complex tangent space and the Levi bracket. The negative part of the grading $\fg_-$ corresponding to a contact distribution is in this case the Heisenberg algebra and provides the Levi--Tanaka algebra $(\fg_-,I)$.

There are several ways how to describe the underlying geometric structures of the parabolic geometries in detail. We recall in Section \ref{Kapunder} a description that is applicable also for the 2--nondegenerate CR geometries with simple models.

\section{2--nondegenerate CR geometries and bigradings of Lie algebras}\label{Kap2}

In this section, we recall the relevant results on 2--nondegenerate CR geometries with simple models from \cite{Gr19}.

For a submanifold $M$ in $\C^N$, we consider the corresponding CR geometry $(M,\mathcal{D},\mathcal{I})$, where $$\mathcal{D}:=TM\cap i(TM)$$ is the complex tangent space and $\mathcal{I}$ is the complex structure on $\mathcal{D}$ induced by multiplication by $i$ on $T\C^N$.  We denote by $\mathcal{K}$ the Levi kernel, i.e., the maximal complex subspace of $\mathcal{D}$ such that $[\mathcal{K},\mathcal{D}]\subset \mathcal{D}$. Then we proceed with a choice of a (local) frame of $\mathcal{D}/\mathcal{K}$ and assume that it identifies (pointwise) $\mathcal{D}/\mathcal{K}$ with $\fg_{-1}$. Unlike in the construction of Levi--Tanaka algebra, we do not pick frame of $\mathcal{D}$, because the 2--nondegeneracy can be characterized according to \cite{Fr74,Fr77} by existence of (local) frame of $\mathcal{D}/\mathcal{K}$ satisfying the condition (2) of the following definition:

\begin{def*}\label{2olta}
We can say that $(M,\mathcal{D},\mathcal{I})$ is a 2--nondegenerate CR geometry with second--order Levi--Tanaka algebra $(\fg_-,I,\fk)$ if there is a (local) frame of $\mathcal{D}/\mathcal{K}$ such that:

\begin{enumerate}
\item the graded Lie algebra defined (via the Lie bracket of vector fields) iteratively as $$\fg_{-i}:=[\fg_{-i+1},\fg_{-1}] \mod \fg_{-i+1}$$ is at each point isomorphic to $\fg_-$,
\item the complex antilinear part of the bracket $$[\mathcal{K},\mathcal{D}/\mathcal{K}]\subset \mathcal{D}/\mathcal{K}$$ defines at each point $x\in M$ bijection of $\mathcal{K}_x$ with a subspace $\fk\subset \frak{gl}(\fg_{-1})$ consisting of complex antilinear endomorphisms of $\fg_{-1}$ for complex structure $I$ on $\fg_{-1}$ induced by $\mathcal{I}$.
\end{enumerate}
\end{def*}

Let us remark that $(\fg_-\oplus \fk,I)$ is the usual Levi--Tanaka algebra of such a CR geometry and that the inclusion $\fk\subset \frak{gl}(\fg_{-1})$ contains the additional information about the Levi kernel (that is of the second--order w.r.t. to the Levi bracket).

Let us recall how to find the (local) frame of $\mathcal{D}/\mathcal{K}$ that has properties from Definition \ref{2olta} and how to decide whether the CR geometry is 2--nondegenerate with second--order Levi--Tanaka algebra $(\fg_-,I,\fk)$. Since the Lie bracket of sections of $\mathcal{D}$ is algebraic modulo $\mathcal{D}$, the Levi kernel $\mathcal{K}$ is characterized by linear equations in an arbitrary chosen (local) frame of $\mathcal{D}$. Therefore, we can easily choose a frame of $\mathcal{D}$ decomposing to the frame of $\mathcal{D}/\mathcal{K}$ and frame of $\mathcal{K}$. Therefore, iterating the brackets of sections of $\mathcal{D}$, we obtain the algebraic brackets $[\fg_{-i+1},\fg_{-1}] \mod \fg_{-i+1}$ that are independent (up to isomorphisms) on the choice of the frame of $\mathcal{D}$. Of course, if the Levi--Tanaka algebra $(\fg_-\oplus \fk,I)$ is not generic, then the algebraic brackets depend on the point of $M$ and we need to assume that they are all the same. Then we can choose the frame of $\mathcal{D}/\mathcal{K}$ such that we obtain the same Lie algebra $\fg_-$ at all points of $M$. Next, we check the 2--nondegeneracy condition (2) in the Definition \ref{2olta} and obtain (pointwise) inclusion of $\mathcal{K}$ into $\frak{gl}(\fg_{-1})$. Of course, if the second--order Levi--Tanaka algebra $(\fg_-\oplus \fk,I)$ is not generic, then the inclusions depend on the point of $M$ and we need to assume that they are all the same. Then we can change the frame of $\mathcal{D}/\mathcal{K}$ without changing $\fg_-$ (i.e. by a function with values in grading preserving automorphisms of $\fg_-$) such that we obtain the same subspace $\fk$ of $\frak{gl}(\fg_{-1})$ at all points of $M$.

We have proved in \cite{Gr19} that the second--order Levi--Tanaka algebras $(\fg_-,I,\fk)$ corresponding to CR geometries with simple models $G/H$ are related to specific bigradings of complex simple Lie groups. In particular, the second--order Levi--Tanaka algebra is uniquely determined by:
\begin{enumerate}
\item a simple real Lie algebra $\fg$,
\item a set of simple roots $\Sigma_1$ providing grading $\fg_{-k}\oplus \dots \oplus \fg_k=\fg_-\oplus \fp$ of $\fg$, which provides $\fg_-$,
\item a Hermitian symmetric pair $(\fg_0,\fg_{0,I})$ such that $\fg_{0,I}$ acts on $\fg_{-1}$ by a complex representation, $I$ is defined as $\pm i$ on irreducible components of $\fg_{-1}$, where the signs are assigned in a way that $$[I(X),I(Y)]=[X,Y]$$ for all $X,Y\in \fg_{-1}$, which provides $I$ and $\fk$,
\item a set of simple roots $\Sigma_2$ providing grading of the complexification of $\fg$ such that the corresponding bigrading $\fg_{a,b}$ of the complexification of $\fg$ given by $\Sigma_1,\Sigma_2$ satisfies
\begin{align*}
\fg_{0,I}\otimes \C&=\fg_{0,0}\\
\fg_{0}\otimes \C&=\fg_{0,-1}\oplus \fg_{0,0}\oplus \fg_{0,1}\\
\fg_{-1}\otimes \C&=\fg_{-1,-1}\oplus \fg_{-1,0}\\
\fg_{-2}\otimes \C&=\fg_{-2,-1}.
\end{align*}
\end{enumerate}

Let us emphasize that according to \cite[Theorem 1.3]{Gr19} the Hermitian symmetric pair from above point (3) provides a set of simple roots $\Sigma_2$ from above point (4), but the converse is not always true, because there can be more real forms $\fg_{0,I}$ of $\fg_{0,0}$ defining a Hermitian symmetric pair $(\fg_0,\fg_{0,I})$. Therefore, the Tables \ref{realclasA} and \ref{realclasB} obtained in \cite[Theorem 1.3]{Gr19} contain the information about $\fg, \Sigma_1,\Sigma_2$ and about the connected components $\fg_{0,I}'$ of $\fg_{0,I}$ that are not connected components of $\fg_0$.

\begin{table}[H]\caption{Classification of classical simple maximally symmetric models of 2--nondegenerate CR geometries}\label{realclasA}
\begin{tabular}{|c|c|c|}
\hline
$\mathfrak{g}$ & $\Sigma_1$ & restrictions\\
$\fg_{0,I}'$& $\Sigma_2$ &  \\
\hline
$\mathfrak{sl}(n+1,\mathbb{R})$&$\{\alpha_r,\alpha_{r+2s}\}$ & \\
$\frak{gl}(s,\C)$ &$\{\alpha_{r+s}\}$& \\
\hline
$\mathfrak{sl}(n+1,\mathbb{H})$&$\{\alpha_{2r},\alpha_{2s}\}$   & \\
$\frak{gl}(s-r,\C)$&$\{\alpha_{r+s}\}$& \\
\hline
$\mathfrak{su}(p,n+1-p)$&$\{\alpha_r,\alpha_{n-r}\}$ & $r<s<n-r$ \\
$\frak{su}(q,s-r-q)\oplus \frak{u}(p-r-q,n+1-p-s+q)$&$\{\alpha_s\}$ &$0\leq q<s-r$ \\
\hline
$\mathfrak{so}(p,2n+1-p)$& $\{\alpha_r,\alpha_{r+2}\}$ & \\
 $\frak{so}(2)$&$\{\alpha_{r+1}\}$ &\\
 \hline
$\mathfrak{so}(p,2n-p)$ or $\mathfrak{so}^*(2n)$& $\{\alpha_r,\alpha_{r+2}\}$ & $r<n-3$\\
 $\frak{so}(2)$&$\{\alpha_{r+1}\}$ &\\
\hline
$\mathfrak{so}(p,q)$ or $\mathfrak{so}^*(2n)$ & $\{\alpha_2\}$ & \\
$\frak{so}(2)$& $\{\alpha_1\}$&\\
\hline
$\mathfrak{so}(p,2n-p)$& $\{\alpha_{p-2q}\}$  &$1<p-2q<n-1$\\
$\frak{u}(q,n-p+q)$& $\{\alpha_n\}$&$0\leq q$\\
\hline
$\mathfrak{so}^*(2n)$& $\{\alpha_{r}\}$&$r<n-1$\\
$\frak{u}(p,n-r-p)$& $\{\alpha_n\}$&\\
\hline
$\mathfrak{so}(n,n), \mathfrak{so}(n-1,n+1)$ or $\mathfrak{so}^*(4m+2)$& $\{\alpha_{n-3},\alpha_{n-1},\alpha_n\}$ &$n=2m+1$ \\
$\frak{so}(2)$& $\{\alpha_{n-2}\}$&\\
\hline
$\mathfrak{sp}(2n,\mathbb{R})$& $\{\alpha_r\}$&$0\leq p$ \\
$\frak{u}(p,n-r-p)$ &$\{\alpha_n\}$&\\
\hline
$\mathfrak{sp}(p,n-p)$& $\{\alpha_r\}$&  \\
$\frak{u}(p-r,n-p-r)$&$\{\alpha_n\}$&\\
\hline
\end{tabular}
\end{table}
\noindent
\begin{table}[H]\caption{Classification of exceptional simple maximally symmetric models of 2--nondegenerate CR geometries}\label{realclasB}
\begin{tabular}{|c|c|c|c|}
\hline
$\mathfrak{g}$ & $\Sigma_1$ & $\Sigma_2$ &$\fg_{0,I}'$ \\
\hline
$\mathfrak{g}_2(2)$ & $\{\alpha_1\}$&$\{\alpha_2\}$&$\frak{so}(2)$ \\
\hline
$\mathfrak{f}_4(4)$ & $\{\alpha_2\}$&$\{\alpha_1\}$&$\frak{so}(2)$ \\
$\mathfrak{f}_4(4)$ & $\{\alpha_1,\alpha_3\}$&$\{\alpha_2\}$&$\frak{so}(2)$ \\
\hline
$\mathfrak{e}_6(6)$ & $\{\alpha_2\}$&$\{\alpha_1\}$&$\frak{so}(2)$ \\
$\mathfrak{e}_6(2)$ & $\{\alpha_6\}$&$\{\alpha_1\}$&$\frak{so}(2)\oplus \frak{u}(2,3)$\\
$\mathfrak{e}_6(-14)$ & $\{\alpha_6\}$&$\{\alpha_1\}$&$\frak{so}(2)\oplus \frak{u}(5)$ \\
$\mathfrak{e}_6(6)$ & $\{\alpha_3\}$&$\{\alpha_6\}$&$\frak{so}(2)$ \\
$\mathfrak{e}_6(2)$ & $\{\alpha_3\}$&$\{\alpha_6\}$&$\frak{so}(2)$\\
$\mathfrak{e}_6(6)$ & $\{\alpha_1,\alpha_3\}$&$\{\alpha_2\}$&$\frak{so}(2)$ \\
$\mathfrak{e}_6(6)$ & $\{\alpha_2,\alpha_4,\alpha_6\}$&$\{\alpha_3\}$&$\frak{so}(2)$ \\
$\mathfrak{e}_6(2)$ & $\{\alpha_2,\alpha_4,\alpha_6\}$&$\{\alpha_3\}$&$\frak{so}(2)$ \\
\hline
$\mathfrak{e}_7(7)$ & $\{\alpha_2\}$&$\{\alpha_1\}$&$\frak{so}(2)$\\
$\mathfrak{e}_7(-5)$ & $\{\alpha_2\}$&$\{\alpha_1\}$&$\frak{so}(2)$ \\
$\mathfrak{e}_7(-25)$ & $\{\alpha_2\}$&$\{\alpha_1\}$&$\frak{so}(2)$ \\
$\mathfrak{e}_7(7)$ & $\{\alpha_5\}$&$\{\alpha_6\}$&$\frak{so}(2)$ \\
$\mathfrak{e}_7(-5)$ & $\{\alpha_5\}$&$\{\alpha_6\}$&$\frak{so}(2)$ \\
$\mathfrak{e}_7(7)$ & $\{\alpha_6\}$&$\{\alpha_1\}$&$\frak{so}(2)\oplus \frak{so}(4,4)$ \\
$\mathfrak{e}_7(-25)$ & $\{\alpha_6\}$&$\{\alpha_1\}$&$\frak{so}(2)\oplus \frak{so}(1,7)$ \\
$\mathfrak{e}_7(7)$ & $\{\alpha_1,\alpha_3\}$&$\{\alpha_2\}$&$\frak{so}(2)$\\
$\mathfrak{e}_7(7)$ & $\{\alpha_2,\alpha_4\}$&$\{\alpha_3\}$&$\frak{so}(2)$ \\
$\mathfrak{e}_7(-5)$ & $\{\alpha_2,\alpha_4\}$&$\{\alpha_3\}$&$\frak{so}(2)$ \\
$\mathfrak{e}_7(7)$ & $\{\alpha_4,\alpha_6\}$&$\{\alpha_5\}$&$\frak{so}(2)$ \\
$\mathfrak{e}_7(-5)$ & $\{\alpha_4,\alpha_6\}$&$\{\alpha_5\}$&$\frak{so}(2)$ \\
$\mathfrak{e}_7(7)$ & $\{\alpha_3,\alpha_5,\alpha_7\}$&$\{\alpha_4\}$&$\frak{so}(2)$ \\
\hline
$\mathfrak{e}_8(8)$ & $\{\alpha_2\}$&$\{\alpha_1\}$&$\frak{so}(2)$\\
$\mathfrak{e}_8(-24)$ & $\{\alpha_2\}$&$\{\alpha_1\}$&$\frak{so}(2)$ \\
$\mathfrak{e}_8(8)$ & $\{\alpha_6\}$&$\{\alpha_7\}$&$\frak{so}(2)$ \\
$\mathfrak{e}_8(8)$ & $\{\alpha_5\}$&$\{\alpha_8\}$&$\frak{so}(2)$ \\
$\mathfrak{e}_8(8)$ & $\{\alpha_1,\alpha_3\}$&$\{\alpha_2\}$&$\frak{so}(2)$ \\
$\mathfrak{e}_8(-24)$ & $\{\alpha_1,\alpha_3\}$&$\{\alpha_2\}$&$\frak{so}(2)$ \\
$\mathfrak{e}_8(8)$ & $\{\alpha_2,\alpha_4\}$&$\{\alpha_3\}$&$\frak{so}(2)$ \\
$\mathfrak{e}_8(8)$ & $\{\alpha_3,\alpha_5\}$&$\{\alpha_4\}$&$\frak{so}(2)$ \\
$\mathfrak{e}_8(8)$ & $\{\alpha_5,\alpha_7\}$&$\{\alpha_6\}$&$\frak{so}(2)$ \\
$\mathfrak{e}_8(8)$ & $\{\alpha_4,\alpha_6,\alpha_8\}$&$\{\alpha_5\}$&$\frak{so}(2)$ \\
\hline
\end{tabular}
\end{table}

Let us recall the solution of equivalence problem for the CR geometries with second--order Levi--Tanaka algebra corresponding to our classification we obtained in \cite[Theorem 1.4]{Gr19}.

\begin{thm*}\label{abspar}
Suppose  $G/H=G/G_{0,I}\rtimes \exp(\fp_+)$ is a homogeneous model of 2--nondegenerate CR submanifold with second--order Levi--Tanaka algebra $(\fg_-,I,\fk)$ corresponding to one of the entries in Tables \ref{realclasA} and \ref{realclasB}. Then there are $G_{0,I}$--invariant normalization conditions that provide equivalence of categories between

\begin{itemize}
\item the category of 2--nondegenerate CR geometries $(M,\mathcal{D},\mathcal{I})$ with second--order Levi--Tanaka algebra $(\fg_-,I,\fk)$,
\item the category of $H$--fiber bundles $\ba\to M$ with a $G_{0,I}$--invariant $\fg$--valued absolute parallelism $\om$ satisfying the normalization conditions.
\end{itemize}

In particular, $dim(\fg)$ bounds the dimension of Lie algebra of infinitesimal CR automorphisms of all 2--nondegenerate CR geometries with second--order Levi--Tanaka algebra $(\fg_-,I,\fk)$ and $G/H$ is the maximally symmetric model.
\end{thm*}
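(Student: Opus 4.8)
The plan is to prove Theorem~\ref{abspar} by exhibiting a functor from the category of 2--nondegenerate CR geometries with second--order Levi--Tanaka algebra $(\fg_-,I,\fk)$ to the category of $H$--bundles with a normalized $G_{0,I}$--invariant absolute parallelism, together with an inverse functor, the construction of the parallelism being a Tanaka--type prolongation adapted to the bigrading of points (1)--(4). First I would extract the order--zero structure from $(M,\mathcal{D},\mathcal{I})$. The filtration $\mathcal{K}\subset\mathcal{D}\subset TM$ together with $\mathcal{I}$ and the complex antilinear action of $\mathcal{K}$ produces, at each point and by the standing constancy assumption, the fixed graded nilpotent algebra $\fg_-$ and the fixed antilinear subspace $\fk$. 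The frames of the associated graded of $TM$ that identify it with $\fg_-$ while matching $I$ and $\fk$ form a principal bundle $\ba_0\to M$ whose structure group is exactly $G_{0,I}$, the group of grading--preserving automorphisms of $\fg_-$ that are complex linear for $I$ and normalize $\fk$. Here it is essential to note that $\fg/\fh\cong\fg_-\oplus\fk$, so that the Levi kernel direction $\fk$ is part of the base $TM\cong\ba\times_H(\fg/\fh)$ rather than a fiber direction; this is what will allow the final parallelism to be $\fg$--valued even though the structure group $G_{0,I}$ sees only the complex linear part of $\fg_0$.

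The core is to promote $\ba_0$ to a bundle $\ba\to M$ with structure group $H=G_{0,I}\rtimes\exp(\fp_+)$ carrying a canonical $\fg$--valued absolute parallelism $\om$, whose frame part reproduces $\fg/\fh=\fg_-\oplus\fk$ and whose fiber part is valued in $\fh=\fg_{0,I}\oplus\fp_+$. I would do this inductively over homogeneity degree: at each step the freedom in prolonging $\om$ by one degree is an affine space modelled on a $G_{0,I}$--module of cochains, and the change of the curvature $\ka$ under this freedom is governed by a prolongation (Spencer) differential $\partial$ built from $\ad$. The normalization conditions are then a choice of $G_{0,I}$--invariant complement $\mathcal{N}$ to $\Imm(\partial)$ in the relevant cochain space, and imposing $\ka\in\mathcal{N}$ fixes the prolongation uniquely at each step. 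Such an invariant complement exists because $G_{0,I}$ is reductive and the cochain modules are completely reducible; crucially, the prolongation terminates with total space exactly $\fg$, which I would verify using the Hermitian symmetric pair $(\fg_0,\fg_{0,I})$ of point (3) and the bigrading relations of point (4). When $\ka\equiv 0$ the parallelism becomes the Maurer--Cartan form of $G$ and $(M,\mathcal{D},\mathcal{I})$ is locally $G/H$.

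For the inverse functor I would start from $(\ba\to M,\om)$ and recover the CR data from $TM\cong\ba\times_H\fg/\fh$: the $H$--invariant filtration of $\fg/\fh$ yields $\mathcal{K}\subset\mathcal{D}$, while the $G_{0,I}$--invariant complex structure $I$ on $\fg_{-1}$ and the antilinear $\fk$--action reconstitute $\mathcal{I}$. That the two assignments are mutually inverse up to natural isomorphism follows because both sides are determined by the same order--zero data and the same normalization $\mathcal{N}$, so prolonging a geometry built from $(\ba,\om)$ returns $(\ba,\om)$ and conversely. The dimension bound is then immediate: an infinitesimal CR automorphism corresponds under the equivalence to a vector field $\xi$ on $\ba$ with $\mathcal{L}_\xi\om=0$, which is determined by its single value $\om(\xi)$ at one point; hence the space of such injects into $\fg$, and since $\dim\ba=\dim\fg$ this bounds the dimension of the Lie algebra of infinitesimal CR automorphisms by $\dim\fg$, with equality attained by the flat model $G/H$.

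The step I expect to be the main obstacle is the construction of the normalization conditions. In the parabolic case one has Kostant's codifferential $\partial^*$ and its Hodge theory to produce a canonical complement to $\Imm(\partial)$, but here the structure group $H$ is \emph{not} parabolic --- it is the proper subgroup $G_{0,I}\rtimes\exp(\fp_+)$ of $P$ reflecting the split $\fg_0=\fg_{0,I}\oplus\fk$ into complex linear and antilinear parts --- so Kostant's machinery is unavailable and $\mathcal{N}$ must be chosen by hand and shown simultaneously to be $G_{0,I}$--invariant, to make $\om$ unique, and to be compatible with termination of the prolongation. Proving that the prolongation algebra is exactly $\fg$, rather than the a priori smaller $\fg_-\oplus\fh$ or something strictly larger, is the delicate point: it is precisely here that the recovery of the extra $\fk$ directions as genuine symmetries, the Hermitian symmetric structure of point (3), and the entries of Tables~\ref{realclasA} and~\ref{realclasB} together with the bigrading constraints of point (4) are indispensable.
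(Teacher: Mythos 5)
Your overall architecture (order--zero frame bundle, inductive normalization of an absolute parallelism, inverse functor, symmetry--dimension bound) superficially matches the paper's route, which is indeed a prolongation with $G_{0,I}$--invariant normalization conditions (the theorem is quoted from \cite[Theorem 1.4]{Gr19}; the construction is recalled in Sections \ref{Kap2}--\ref{Kapunder}). But there are two genuine gaps. First, your order--zero data is purely graded: frames of the associated graded of $TM$ matching $I$ and $\fk$. Such frames only encode the complex structures on $\mathcal{K}$ and $\mathcal{D}/\mathcal{K}$ and the antilinear action, \emph{not} the complex structure $\mathcal{I}$ on $\mathcal{D}$ itself. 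The paper stresses exactly this point: the infinitesimal pseudo--$\fg_0$--structure ``does not allow, in general, to reconstruct the whole complex structure $\mathcal{I}$ on $\mathcal{D}$,'' and for that reason the normalization conditions include the requirement that the restriction of $\si^*\om$ to $T^{-1}\ba_0\to \fg^{-1}/(\fg_{0,I}\oplus\fp_+)$ be complex linear for $\mathcal{I}$ --- a condition anchoring $\om$ to the actual CR structure, not a curvature normalization. Without it your two functors cannot be mutually inverse: Theorems \ref{fin1} and \ref{fin2} of this very paper produce, for $\mathfrak{g}_2(2)$ and $\frak{sp}(2N,\mathbb{R})$, families of inequivalent CR structures with identical graded data, differing only by $\mathcal{I}-\mathcal{\tilde I}$, and your construction would assign them the same normalized parallelism.

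Second, the termination claim fails as you set it up. The symbol algebra of the filtered manifold is $\fg_-\oplus\fk$ with $\fk$ bracket--trivial in the symbol (that is what being the Levi kernel means), so it is a \emph{degenerate} symbol, and the Tanaka/Spencer prolongation of a $G_{0,I}$--structure on it is infinite--dimensional; an induction whose cochain spaces and differential are ``built from $\ad$'' over this symbol will not close up at $\fg$, Hermitian symmetric pair or not. Finiteness is obtained only by feeding the second--order data --- $\fk$ realized inside $\fg_0$ acting antilinearly on $\fg_{-1}$ --- into the prolongation scheme itself. This is precisely what the paper's infinitesimal pseudo--$\fg_0$--structure accomplishes: the $\fk$--directions appear as a transitive infinitesimal $\fg_0$--action on the $G_{0,I}$--bundle $\ba_0$ of complex frames of $\mathcal{D}/\mathcal{K}$ (existence and uniqueness of this action is \cite[Proposition 4.2]{Gr19}, a nontrivial step absent from your proposal), and the normalization is organized by the \emph{bigrading} from $\Sigma_1,\Sigma_2$, under which $\fk\otimes\C\subset \fg_{0,-1}\oplus\fg_{0,1}$ carries nonzero second degree; a single--grading induction cannot see this bookkeeping. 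Relatedly, in the $\frak{sp}(2N,\mathbb{R})$, $\Sigma_1=\{1\}$ case your claim that imposing $\kappa\in\mathcal{N}$ ``fixes the prolongation uniquely at each step'' breaks down: there the first prolongation requires an additional reduction (the contact projective phenomenon, cf.\ Section \ref{Kapunder} and Section \ref{KapSp}), which again is supplied by the complex--linearity condition tied to $\mathcal{I}$.
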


We recall further details on the above solution of the equivalence problem and the normalization conditions in the next section.

\section{Weyl structures and underlying geometric structures}\label{Kapunder}

There is a common construction of a parabolic geometry $(\ba\to N,\tilde \om)$ of type $(G,P)$ and the absolute parallelism $\om$ on the $H$--fiber bundle $\ba\to M$ starting with an appropriate underlying geometric structure. This involves the so--called Weyl structures, i.e., sections of $$\si: \ba_0:=\ba/\exp(\fp_+)\to \ba$$ that are equivariant for the structure group $G_0$ on $\ba_0$ in the case of parabolic geometries or the structure group $G_{0,I}$ on $\ba_0$ in the case of 2--nondegenerate CR geometries with simple models. If we decompose the pullbacks $\si^*\tilde \om$ and $\si^*\om$ according to the grading (or bigrading if we complexify  $\si^*\om$), then the parts that do not depend on the choice of the Weyl structure provide a distinguished underlying geometric structure that we use as the starting point in the construction of $\tilde \om$ and $\om$. In this article, we will not need the explicit formulas how $\si^*\tilde \om$ and $\si^*\om$ depend on the choice of the Weyl structure, which can be found in \cite[Sector 5.1]{parabook} and \cite[Section 4.2]{Gr19}.

In the case of parabolic geometries, we start with the so--called pseudo--$G_0$--structures of type $\fg_-$, see \cite[Section 3.1]{parabook}:

\begin{def*}\label{infG0}
A pseudo--$G_0$--structure of type $\fg_-$ (a regular infinitesimal flag structure of type $(G,P)$ in terminology of  \cite{parabook}) on a manifold $N$ is a principal $G_0$--bundle $\ba_0$ over $N$ together with filtration $$T^{-k}\ba_0\supset \dots \supset T^{-1}\ba_0\supset T^{0}\ba_0$$ and collection of $G_0$--equivariant pseudo one--forms  $$\theta_i: T^{i}\ba_0\to \fg_i$$  for $i<0$ with the following properties:

\begin{enumerate}
\item $T^{0}\ba_0$ is the vertical bundle,
\item $Ker(\theta_i)=T^{i+1}\ba_0$,
\item $\theta_i|_{T^{i}\ba_0/T^{i+1}\ba_0}$ is isomorphism,
\item the Lie bracket of sections $\nu$ of $T^{i}\ba_0$ and $\xi$ of $T^{j}\ba_0$ for $i,j< 0$ is a section of $T^{i+j}\ba_0$ and for $i+j\geq -k$ holds $$\theta_{i+j}([\nu,\xi])=[\theta_i(\nu),\theta_j(\xi)],$$
\item there is an extension of $\theta_i$ to $\fg_{-}$--valued one--forms on $T\ba_0$ such that (4) holds for the extensions modulo $\fg_{i+j+1}$.
\end{enumerate}
\end{def*}

In the situations corresponding to our classification, we can distinguish the following three situations with respect to the underlying geometric structure required by the construction of $\tilde \om$ using the normalization condition $$\partial^*\kappa=0$$ for the curvature $\kappa$ of $\tilde \om$:

\begin{enumerate}
\item the parabolic geometries of type $(Sp(2N,\mathbb{R}),P_1)$ (the so--called contact projective geometries) that are determined by a choice of a class of (partial) Weyl connections on $T^{-1}N$, which correspond to complements of vertical bundle in $T^{-1}\ba_0$ given by $\si^*\tilde \om^{-1}(\fg_{-1})$ for all Weyl structures $\si$,
\item the parabolic geometries such that $k=2,\dim(\fg_{-2})=1$ or with type $(Sl(N+1,\mathbb{R}),P_{1,i})$ that are determined by the pseudo--$G_0$--structure of type $\fg_-$,
\item  the remaining parabolic geometries that are determined by the distribution $T^{-1}N$, i.e., the pseudo--$G_0$--structure $\ba$ of type $\fg_-$ is just the bundle of frames of $T^{-1}N$ with the canonical soldering pseudo one--forms.
\end{enumerate}

Let us recall that the Tanaka prolongation described in \cite{Ta70} provides an alternative construction of $\tilde \om$ starting with the pseudo--$G_0$--structures of type $\fg_-$ (with an additional reduction of the first prolongation in the case of contact projective structures) and there are also other prolongation procedures for parabolic geometries summarized in \cite[Appendix A]{parabook}.

In the case of 2--nondegenerate CR geometries with simple models, we start with the so--called infinitesimal pseudo--$\fg_0$--structure of type $\fg_-$, see \cite[Section 4.1]{Gr19}.

\begin{def*}
An infinitesimal pseudo--$\fg_0$--structure of type $\fg_-$ on the principal $G_{0,I}$--bundle $\ba_0$ of complex frames of $\mathcal{D}/\mathcal{K}$ is a collection of pseudo one--forms $$\theta_i: T^{i}\ba_0\to \fg_i$$  for $i<0$ on a filtration $$T^{-k}\ba_0\supset \dots \supset T^{-1}\ba_0\supset T^{0}\ba_0$$ with the following properties:

\begin{enumerate}
\item $T^{0}\ba_0$ is integrable bundle with transitive infinitesimal $\fg_0$--action, which can be extended to $T\ba_0$ in a way that all $\theta_i$ are $\fg_0$--equivariant,
\item $Ker(\theta_i)=T^{i+1}\ba_0$,
\item $\theta_i|_{T^{i}\ba_0/T^{i+1}\ba_0}$ is isomorphism,
\item the Lie bracket of sections $\nu$ of $T^{i}\ba_0$ and $\xi$ of $T^{j}\ba_0$ for $i,j<0$ is a section of $T^{i+j}\ba_0$ and for $i+j\geq -k$ holds $$\theta_{i+j}([\nu,\xi])=[\theta_i(\nu),\theta_j(\xi)],$$
\item there is an extension of $\theta_i$ to $\fg_{-}$--valued one--forms on $T\ba_0$ such that (4) holds for the extensions modulo $\fg_{i+j+1}$.
\end{enumerate}
\end{def*}

Let us emphasize that the transitive infinitesimal $\fg_0$--action on the principal $G_{0,I}$--bundle $\ba_0$ of complex frames of $\mathcal{D}/\mathcal{K}$ does not appear automatically and we proved its existence and uniqueness in \cite[Proposition 4.2]{Gr19} for the 2--nondegenerate CR geometries $(M,\mathcal{D},\mathcal{I})$ with second--order Levi--Tanaka algebra $(\fg_-,I,\fk)$ corresponding to one of the cases from Tables  \ref{realclasA} and \ref{realclasB}. In particular, according to \cite[Proposition 4.2]{Gr19}, the infinitesimal pseudo--$\fg_0$--structure of type $\fg_-$ on the principal $G_{0,I}$--bundle $\ba_0$ of complex frames of $\mathcal{D}/\mathcal{K}$ is obtained by normalizing the components of homogeneity $0,1$ w.r.t. to the gradings given by $\Sigma_1,\Sigma_2$ of complexification of $d(\si^*\om)+[\si^*\om,\si^*\om]$ evaluated on elements of $\si^*\om^{-1}(\fg)$.

We need to emphasize that the infinitesimal pseudo--$\fg_0$--structure of type $\fg_-$ does not allow, in general, to reconstruct the whole complex structure $\mathcal{I}$ on $\mathcal{D}$, it only provides the complex structures on $\mathcal{K}$ and $\mathcal{D}/\mathcal{K}$. This is reflected in the normalization conditions in the construction of $\om$, where we need to assume that  the restriction of $\si^*\om$ to ${T^{-1}\ba_0}\to \fg^{-1}/(\fg_{0,I}\oplus \fp_+)$ is complex linear and then proceed with further normalization conditions on components of (complexification of) $d(\si^*\om)+[\si^*\om,\si^*\om]$. Let us remark that in the case $\frak{sp}(2N,\mathbb{R}),\Sigma_1=\{1\}$, this provides an additional reduction of the slight generalization of the first Tanaka prolongation to the infinitesimal pseudo--$\fg_0$--structure of type $\fg_-$ described in \cite[Section 4.1]{Gr19}.

At first glance, the properties (1) are the only difference between the pseudo--$G_0$--structure and the infinitesimal pseudo--$\fg_0$--structure, i.e., the $\fg_0$--action does not have to be integrable to $G_0$--action. However, we need to emphasize that the normalization conditions mentioned in the above constructions of   $\om$ and $\tilde \om$ are not compatible, in general.

\section{Underlying parabolic geometries and the CR geometry on the correspondence space}\label{kap2under}

There is a known construction of infinitesimal pseudo--$\fg_0$--structure of type $\fg_-$ from a parabolic geometry of type $(G,P)$. If we start with the normal, regular parabolic geometry $(\ba\to N,\tilde \om)$ of type $(G,P)$ corresponding to one of the cases from Tables  \ref{realclasA} and \ref{realclasB}, where $P=G_0\exp(\fp_+)$, then $$\ba_0:=\ba/\exp(\fp_+)$$ is principal $G_{0,I}$--bundle over $$M:=\ba/G_{0,I}\exp(\fp_+)$$ and $\si^*\tilde \om$ induces a  pseudo--$\fg_0$--structure of type $\fg_-$ on $\ba_0\to M$ that is independent of the choice of Weyl structure $\si$. However, in general, this determines only an almost CR geometry with second--order Levi--Tanaka algebra $(\fg_-,I,\fk)$ on $M$, which we call the \emph{almost CR structure on the correspondence space} $M$.

\begin{thm*}\label{harm}
The 2--nondegenerate almost CR structure $(M,\mathcal{D},\mathcal{\tilde I})$ on the correspondence space $\ba/G_{0,I}\exp(\fp_+)$ of a regular, normal parabolic geometry $(\ba\to N,\tilde\om)$  of type $(G,P)$ corresponding to one of the cases from the Tables  \ref{realclasA} and \ref{realclasB} is (formally) integrable if and only if one of the following claims holds:

\begin{enumerate}
\item the parabolic geometry is flat and there,
\item the type of parabolic geometry is listed in the Table \ref{realclasC} and the harmonic curvature is nontrivial only in the specified components.
\end{enumerate}

If (1) holds, then there are no fundamental invariants and if (2) holds, then the fundamental invariants of the CR geometry are provided by the component of the complexification of the admissible harmonic curvature in the $\fg_{0,0}$--submodule containing the of lowest weight vector.
\end{thm*}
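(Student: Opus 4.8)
The plan is to reduce the question of (formal) integrability of $(M,\mathcal{D},\mathcal{\tilde I})$ to an algebraic obstruction living in the harmonic curvature, exploiting the bigrading structure from Section \ref{Kap2}. First I would recall that the almost CR structure on the correspondence space is built from $\si^*\tilde\om$ and that the obstruction to formal integrability of an almost CR structure is the Nijenhuis tensor of $\mathcal{\tilde I}$, together with its higher analogues measuring the failure of the complex-linearity conditions that were imposed by hand in the construction of $\om$ (see the normalization discussion at the end of Section \ref{Kapunder}). The key point is that, since the underlying pseudo--$\fg_0$--structure of type $\fg_-$ coming from $\si^*\tilde\om$ is \emph{independent} of the Weyl structure $\si$, the integrability obstruction is expressible purely in terms of the curvature $\kappa$ of $\tilde\om$, and by naturality it must lie in a $G_{0,I}$--submodule of the full curvature module. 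Passing to the complexification and decomposing according to the bigrading $\fg_{a,b}$, the components of $\mathcal{\tilde I}-\mathcal{I}$ that can be nonzero are exactly those where $\kappa$ has a component that mixes the $\fg_{-1,-1}$ and $\fg_{-1,0}$ pieces of $\fg_{-1}\otimes\C$ in a way incompatible with $I=\pm i$ on the two factors.

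Next I would make this algebraic: the formal integrability of $\mathcal{\tilde I}$ holds if and only if certain explicit components of $\kappa$, read off through $\theta_i$ and the extension in Definition of the infinitesimal pseudo--$\fg_0$--structure, vanish. Because $\tilde\om$ is normal, $\kappa$ lies in $\mathrm{Ker}(\partial^*)$, and by the Kostant theory each harmonic component is an irreducible $\fg_0$--module generated by a lowest (or highest) weight vector whose weight is computed from $\Sigma_1\cup\Sigma_2$ via the Hasse diagram / affine Weyl group action. I would therefore go case by case through Tables \ref{realclasA} and \ref{realclasB}, compute for each the relevant harmonic curvature modules, and check whether the offending bigrading components (those obstructing complex linearity of $\si^*\om$ on $T^{-1}\ba_0$) are forced to vanish. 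The outcome of this computation is precisely Table \ref{realclasC}: integrability holds either when $\kappa\equiv 0$ (the flat case (1), where everything collapses to the model and there are no invariants), or when the only surviving harmonic components are the admissible ones listed there.

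Finally, for the statement about fundamental invariants in case (2), I would argue that once integrability is established the nonzero admissible harmonic curvature \emph{is} a CR invariant, since harmonic curvatures are the fundamental invariants of the parabolic geometry (Section \ref{Kap1}) and the correspondence-space construction is natural. The refinement that the invariant is the component in the $\fg_{0,0}$--submodule containing the lowest weight vector follows because, under the $\fg_0\otimes\C=\fg_{0,-1}\oplus\fg_{0,0}\oplus\fg_{0,1}$ decomposition, only the $\fg_{0,0}$--equivariant part survives the complex-linearity normalization that defines $\mathcal{\tilde I}$; the lowest weight vector generates this submodule by irreducibility. \textbf{The main obstacle} I expect is the case analysis itself: correctly identifying, for each entry of the Tables, which harmonic curvature components are compatible with the imposed complex structure and which obstruct integrability. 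This requires matching the Kostant weights of the harmonic modules against the bigrading constraints $\fg_{-1}\otimes\C=\fg_{-1,-1}\oplus\fg_{-1,0}$ and $\fg_{-2}\otimes\C=\fg_{-2,-1}$, and the bookkeeping across the exceptional cases in Table \ref{realclasB} is where errors are most likely to creep in; the conceptual reduction to "a single weight condition on a harmonic module" is what makes it tractable.
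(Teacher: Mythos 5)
Your overall strategy matches the paper's: identify the failure of integrability of $\mathcal{D}^{10}$ and $\mathcal{D}^{01}$ with specific bigraded components of the curvature of $\tilde\om$, then use normality, Kostant's description of $\mathrm{Ker}(\partial^*)/\mathrm{Im}(\partial^*)$ and a case-by-case analysis over the Tables (the paper uses the classification of regular pairs $(i,j)$ from \cite{KT17}) to arrive at Table \ref{realclasC}. The treatment of the fundamental invariants is also in the same spirit: the paper's justification for passing to the $\fg_{0,0}$--submodule containing the lowest weight vector is that the harmonic curvature is viewed on fibres isomorphic to $G_0/G_{0,I}$, so that single $\fg_{0,0}$--submodule already determines the whole $\fg_0$--module; your "equivariance" phrasing is vaguer but aims at the same point.

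However, there is a genuine gap in the step that carries the "if" direction of the theorem. The integrability obstruction is a component of the \emph{full} curvature $\kappa$, namely (in the paper's notation) its components in $\fg_{-1,0}^*\otimes \fg_{-1,0}^*\otimes \fg_{-1,-1}$ and $\fg_{-1,0}^*\otimes \fg_{-1,0}^*\otimes \fg_{0,-1}$, whereas normality only places $\kappa$ in $\mathrm{Ker}(\partial^*)$; the harmonic curvature is merely its class modulo $\mathrm{Im}(\partial^*)$, and only the lowest nonzero homogeneity of $\kappa$ is forced to be harmonic. So your plan to "compute the relevant harmonic curvature modules and check whether the offending bigrading components are forced to vanish" rules out the obstruction only in the harmonic part: even when the harmonic curvature sits entirely in admissible positions $\fg_{-1,0}^*\otimes\fg_{a,-1}^*\otimes\fg_{b,c}$, $a<0$, the non-harmonic, higher-homogeneity components of $\kappa$ could a priori land in the obstructing positions. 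Closing this requires the argument the paper takes from the BGG machinery of \cite{CS17}: the full curvature is reconstructed from the harmonic curvature via splitting operators, and tracing the bigrading through that reconstruction shows the obstructing components of $\kappa$ vanish whenever the harmonic curvature is admissible. Without this reconstruction step your equivalence reduces to a statement about harmonic curvature only, which is strictly weaker than formal integrability of $(M,\mathcal{D},\mathcal{\tilde I})$.
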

\noindent
\begin{table}[H]\caption{Classification of fundamental invariants corresponding to harmonic curvatures}\label{realclasC}
\begin{tabular}{|c|c|c|}
\hline
$\mathfrak{g}$ & $\Sigma_1$ & harm. cur.\\
$\fg_{0,I}'$& $\Sigma_2$ &  restrictions \\
\hline
$\mathfrak{sl}(4,\mathbb{R})$&$\{\alpha_1,\alpha_{3}\}$ &$(1,2),(3,2)$ \\
$\frak{so}(2)$ &$\{\alpha_{2}\}$& \\
\hline
$\mathfrak{sl}(n+1,\mathbb{R})$&$\{\alpha_1,\alpha_{3}\}$ &$(1,2)$ \\
$\frak{so}(2)$ &$\{\alpha_{2}\}$&$n>3$ \\
\hline
$\mathfrak{su}(p,4-p)$&$\{\alpha_1,\alpha_{3}\}$ & $(1,2),(3,2)$ \\
$\frak{u}(p,2-p)$&$\{\alpha_2\}$ & \\
\hline
$\mathfrak{so}(3,4)$& $\{\alpha_1,\alpha_{3}\}$ &$(3,2)$ \\
 $\frak{so}(2)$&$\{\alpha_{2}\}$ &\\
\hline
$\mathfrak{so}(p,q)$ or $\mathfrak{so}^*(2n)$& $\{\alpha_2\}$ &$(2,1)$ \\
$\frak{so}(2)$& $\{\alpha_1\}$&$p+q>5$\\
\hline
$\mathfrak{sp}(2n,\mathbb{R})$ or $\mathfrak{sp}(p,p+1)$ & $\{\alpha_{n-1}\}$&$(n-1,n)$  \\
$\frak{so}(2)$ &$\{\alpha_n\}$&$n=2p+1$\\
\hline
$\mathfrak{g}_2(2)$ & $\{\alpha_1\}$&$(2,1)$\\
$\frak{so}(2)$& $\{\alpha_2\}$& \\
\hline
\end{tabular}
\end{table}
\begin{proof}
Let us start with regular, normal parabolic geometry $(\ba\to N,\tilde\om)$  of type $(G,P)$ corresponding to one of the cases from the Tables  \ref{realclasA} and \ref{realclasB}. Then on the correspondence space $M=\ba/G_{0,I}\exp(\fp_+)$ with second--order Levi--Tanaka algebra $(\fg_-,I,\fk)$, we have the distribution $$\mathcal{D}\cong \ba\times_{G_{0,I}\exp(\fp_+)}(\fg^{-1}/(\fg_{0,I}\oplus \fp_+))$$ and complex structure $\tilde{\mathcal{I}}$ induced via $\tilde\om$ by the complex structures on $\fg_{-1}\oplus \fk$. This is only an almost CR structure, because the bundles $$\mathcal{D}^{10}\cong \ba\times_{G_{0,I}\exp(\fp_+)}((\fg_{-1,-1}\oplus \fg_{0,-1}\oplus \fg_{0,0}\oplus \fp_+\otimes \C)/(\fg_{0,0}\oplus \fp_+\otimes \C)$$ and $$\mathcal{D}^{01}\cong \ba\times_{G_{0,I}\exp(\fp_+)}((\fg_{-1,0}\oplus \fg_{0,1}\oplus \fg_{0,0}\oplus \fp_+\otimes \C)/(\fg_{0,0}\oplus \fp_+\otimes \C)$$ do not have to be integrable.

Let us recall that $Ker(\partial^*)/Im(\partial^*)$ decomposes into irreducible $\fg_0$--modules and the lowest weight space $$\fg_{-\alpha_i}^*\otimes \fg_{-s_{\alpha_j}\alpha_i}^*\otimes\fg_{-s_{\alpha_j}s_{\alpha_i}\lambda}$$ of each  $\fg_0$--module is determined by an ordered pair of numbers $(i,j)$, where $s$ is the reflexion in the root lattice and $\lambda$ is the highest root. Since we view the complexification harmonic curvature on the fibres isomorphic to $G_0/G_{0,I}$, the component of harmonic curvature in $\fg_{0,0}$--submodule containing the lowest weight vector is enough to determine the full harmonic curvature.

The regularity restricts the possible pairs $(i,j)$ and the full classification can be found for example in \cite[Appendix C]{KT17}. Thus we can compute the bigrading components of the lowest weight space $\fg_{-\alpha_i}^*\otimes \fg_{-s_{\alpha_j}\alpha_i}^*\otimes\fg_{-s_{\alpha_j}s_{\alpha_i}\lambda}$ for the cases in our tables. There are only several cases that can appear:

If there is nontrivial haramonic curvature in components $\fg_{-1,0}^*\otimes \fg_{-1,0}^*\otimes \fg_{-1,-1}$ or $\fg_{-1,0}^*\otimes \fg_{-1,0}^*\otimes \fg_{0,-1}$, then there are points in the fiber that have curvature in these $\fg_{0,0}$--submodules, which are clearly obstructions to integrability.

Otherwise, if there is nontrivial haramonic curvature in components $\fg_{-1,0}^*\otimes \fg_{a,-1}^*\otimes \fg_{b,c}$ for $a<0$, then it follows by the general results on the reconstruction of the curvature $\kappa$ from the harmonic curvature via splitting operators, c.f. \cite[Section 3.3]{CS17}, that the components of curvature $\fg_{-1,0}^*\otimes \fg_{-1,0}^*\otimes \fg_{-1,-1}$ or $\fg_{-1,0}^*\otimes \fg_{-1,0}^*\otimes \fg_{0,-1}$ vanish and thus the CR geometry is (formally) integrable.
\end{proof}

Therefore if we prove an existence and uniqueness of the underlying parabolic geometry, then we can compare the CR geometry $(M,\mathcal{D},\mathcal{I})$ with the CR geometry on the correspondence space. For this we need to distinguish the $\frak{sp}(2N,\mathbb{R}),\Sigma_1=\{1\}$ case from the other cases.

\begin{prop*}\label{under}
Let $(M,\mathcal{D},\mathcal{I})$ be a 2--nondegenerate CR geometry with second--order Levi--Tanaka algebra $(\fg_-,I,\fk)$ corresponding to one of the cases from Tables  \ref{realclasA} and \ref{realclasB} different from $\frak{sp}(2N,\mathbb{R}),\Sigma_1=\{1\}$. Then for each $x\in M$, there is a neighborhood $U$ of $x$ and  Cartan geometry $(\ba\to N,\tilde\om)$ of type $(G,P)$ on the leaf space $U\to N$ with leafs corresponding to integral manifolds of the Levi kernel $\mathcal{K}$ such that the infinitesimal pseudo--$\fg_0$--structure of type $\fg_-$ on $U$ is isomorphic to an open subset of the infinitesimal pseudo--$\fg_0$--structure of type $\fg_-$ on the correspondence space $\ba/G_{0,I}\exp(\fp_+)$ induced by $\tilde \om$.
\end{prop*}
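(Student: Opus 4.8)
The plan is to bridge the \emph{infinitesimal} pseudo--$\fg_0$--structure of type $\fg_-$ that lives on the frame bundle $\ba_0\to M$ with a genuine pseudo--$G_0$--structure in the sense of Definition \ref{infG0} living on the local leaf space $N$ of $\mathcal{K}$, and then to feed the latter into the standard prolongation for parabolic geometries. Comparing the two notions, the only discrepancy is condition (1): for the infinitesimal structure $T^0\ba_0$ is merely an integrable subbundle carrying a transitive infinitesimal $\fg_0$--action, whereas for a pseudo--$G_0$--structure it must be the vertical bundle of a principal $G_0$--bundle over $N$. The whole content of the statement is therefore the \emph{local integration} of this infinitesimal action to a principal $G_0$--bundle structure over $N$, followed by the uniqueness part of the prolongation, which is at our disposal precisely because the contact projective case $\frak{sp}(2N,\mathbb{R}),\Sigma_1=\{1\}$ has been excluded.

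First I would fix $x\in M$ and, using involutivity of the Levi kernel $\mathcal{K}$ and the Frobenius theorem, shrink to a neighborhood $U$ of $x$ on which the integral leaves of $\mathcal{K}$ fibre $U$ over a smooth leaf space $U\to N$. On $\ba_0\to U$ I would then integrate the transitive infinitesimal $\fg_0$--action provided by \cite[Proposition 4.2]{Gr19}. Since $T^0\ba_0$ is integrable of rank $\dim(\fg_0)$ and the action is transitive along it, the $\fg_0$--fundamental fields are pointwise linearly independent, so the action is infinitesimally free. Fixing a group $G_0$ with Lie algebra $\fg_0$ that contains the structure group $G_{0,I}$ as the subgroup integrating $\fg_{0,I}\subset\fg_0$, the standard integration of a Lie algebra homomorphism $\fg_0\to\Gamma(T\ba_0)$ to a local $G_0$--action yields, after a further shrinking of $U$, a free and proper local $G_0$--action extending the principal $G_{0,I}$--action, whose orbits project onto the leaves of $\mathcal{K}$. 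Thus $\ba_0\to N$ becomes a principal $G_0$--bundle with $\ba_0/G_{0,I}=U$ and $\ba_0/G_0=N$, and the $\fg_0$--orbit directions are now exactly its vertical bundle.

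With this in hand, the forms $\theta_i$ together with the filtration $T^i\ba_0$ satisfy all of (1)--(5) of Definition \ref{infG0}: condition (1) is precisely the principal $G_0$--bundle structure just produced, while (2)--(5) coincide verbatim with the corresponding conditions of the infinitesimal pseudo--$\fg_0$--structure. Hence $N$ carries a regular pseudo--$G_0$--structure of type $\fg_-$. Because the excluded case $\frak{sp}(2N,\mathbb{R}),\Sigma_1=\{1\}$ is exactly the contact projective situation (1) of Section \ref{Kapunder}, where this data has to be supplemented by a class of Weyl connections, every remaining case falls under (2) or (3) of Section \ref{Kapunder}, and there the pseudo--$G_0$--structure (or the distribution $T^{-1}N$ it determines) pins down a unique regular normal parabolic geometry $(\ba\to N,\tilde\om)$ of type $(G,P)$ via the equivalence of categories and the Tanaka prolongation (\cite[Section 3.1]{parabook}, \cite{Ta70}), with $\ba/\exp(\fp_+)=\ba_0$.

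Finally I would verify that $(\ba\to N,\tilde\om)$ reproduces the given data at the infinitesimal level. By construction $\ba/G_{0,I}\exp(\fp_+)=\ba_0/G_{0,I}=U$ is an open subset of the correspondence space, and the pseudo--$\fg_0$--structure that $\si^*\tilde\om$ induces on it (as in Section \ref{kap2under}) is assembled from the very same $\theta_i$ we started with, because the normalization of \cite[Proposition 4.2]{Gr19} recovering $\theta_i$ from the parallelism acts as the identity on the relevant homogeneity components; hence the two infinitesimal pseudo--$\fg_0$--structures agree on $U$ (note that only the infinitesimal structure is matched, not the full complex structure $\mathcal{I}$ on $\mathcal{D}$). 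I expect the integration step to be the main obstacle: one must check that the infinitesimal $\fg_0$--action is not merely locally free but integrates to a \emph{free and proper} local action whose quotient is the genuine leaf space $N$, and that it can be realized inside a fixed $G_0\supset G_{0,I}$ compatibly with the pre--existing principal $G_{0,I}$--action; this is where the passage to a small $U$ and the matching of the $\fg_{0,I}$--part of the action with the structure group action must be handled with care.
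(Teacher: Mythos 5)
Your overall strategy coincides with the paper's: pass to a local leaf space via Frobenius, integrate the transitive infinitesimal $\fg_0$--action, feed the resulting pseudo--$G_0$--structure into the prolongation machinery (legitimate precisely because the contact projective case $\frak{sp}(2N,\mathbb{R}),\Sigma_1=\{1\}$ is excluded), and conclude via the theory of correspondence spaces. However, the key integration step is carried out incorrectly. You claim that, after shrinking $U$, the integrated ``free and proper local $G_0$--action'' makes $\ba_0\to N$ itself into a principal $G_0$--bundle with $\ba_0/G_0=N$. This is false in general: the fibres of $\ba_0\to N$ lie over the leaves of $\mathcal{K}$ inside $U$, and these leaves are only open pieces of the model symmetric space $P/H\cong G_0/G_{0,I}$; shrinking $U$ makes them smaller, never complete. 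Concretely, in the flat model $G/H$ with $U$ a small ball, $\ba_0|_U$ is an open subset of $G/\exp(\fp_+)$ whose fibres over points of $N\subset G/P$ are proper open subsets of $G_0$, so they cannot be $G_0$--torsors no matter how $U$ is shrunk; the orbits of a local action never fill out such fibres. Consequently $\ba_0\to N$ does not satisfy Definition \ref{infG0} (which requires an honest principal $G_0$--bundle), and the Tanaka/parabolic prolongation you invoke in the next step has no valid input.

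The missing idea, and the paper's actual argument, is to \emph{enlarge} rather than to act: use the fundamental vector fields of the infinitesimal $\fg_0$--action to identify $\ba_0$ over $U$ with an \emph{open subset} of the trivial bundle $N\times G_0$, and then define the pseudo--$G_0$--structure on all of $N\times G_0$ by extending the forms $\theta_i$ $G_0$--equivariantly, which is well defined because of their infinitesimal $\fg_0$--equivariance. The prolongation is then applied to $N\times G_0$, producing the unique regular normal parabolic geometry $(\ba\to N,\tilde\om)$, and the correspondence space description from \cite[Sections 1.5.13 and 1.5.14]{parabook} identifies the induced infinitesimal pseudo--$\fg_0$--structure with the original one on the open subset $\ba_0|_U$; this is exactly why the statement only asserts an isomorphism with an \emph{open subset} of the structure on $\ba/G_{0,I}\exp(\fp_+)$. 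Your remaining steps (uniqueness of the underlying parabolic geometry from the exclusion of the contact projective case, and the final comparison on the correspondence space) become sound once this correction is made.
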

\begin{proof}
Firstly, we can choose a neighborhood $U$ of $x$ in such a way that the leaf space $U\to N$ exists and the bundle $\ba_0$ of complex frames of $\mathcal{D}/\mathcal{K}$ is trivial over $U$. Further, we shrink $U$ such that fundamental fields of the infinitesimal $\fg_0$--action on $\ba_0$ identify $\ba_0$ over $U$ with an open subset of $N\times G_0$. Using the $G_0$--action we obtain a pseudo--$G_0$--structure $N\times G_0$ of type $\fg_-$ on $N$ that coincides with the infinitesimal pseudo--$\fg_0$--structure of type $\fg_-$ on $\ba_0$. So as we reviewed in the previous section, we obtain a unique underlying parabolic geometry of type $(G,P)$ and  the description of correspondence spaces from \cite[Sections 1.5.13 and 1.5.14]{parabook} translate into the statement of the proposition.
\end{proof}

Let us emphasize that the Cartan geometry $(\ba\to N,\tilde\om)$ of type $(G,P)$ from Proposition \ref{under} provides the same complex structure $\mathcal{\tilde I}$ on every $T^{-1}U$ and thus a complex structure $\mathcal{\tilde I}$ on $\mathcal{D}$.

\section{Fundamental invariants in the general case}\label{genc}

In this section, we compare the complex structure $\mathcal{I}$ on $\mathcal{D}$ with the complex structure $\mathcal{\tilde I}$ on $\mathcal{D}$ provided by the underlying parabolic geometry from the previous section. This allows us to obtain the fundamental invariants for all cases except $\frak{sp}(2N,\mathbb{R}),\Sigma_1=\{1\}$.

\begin{thm*}\label{fin1}
Let $(M,\mathcal{D},\mathcal{I})$ be a 2--nondegenerate CR geometry with second--order Levi--Tanaka algebra $(\fg_-,I,\fk)$ corresponding to one of the cases from the Tables  \ref{realclasA} and \ref{realclasB} different from $\frak{sp}(2N,\mathbb{R}),\Sigma_1=\{1\}$. If $\fg\neq \mathfrak{g}_2(2)$, then

\begin{enumerate}
\item $\mathcal{I}$ and $\mathcal{\tilde I}$ coincide.
\item There is a Cartan connection $\om$ of type $(G,G_{0,I}\exp(\fp_+))$ solving the equivalence problem of CR geometries with second--order Levi--Tanaka algebra $(\fg_-,I,\fk)$.
\item The Cartan connection can be non flat in cases from Table \ref{realclasC} that characterizes all fundamental invariants.
\item The second--order Levi--Tanaka algebra $(\fg_-,I,\fk)$ is not generic.
\end{enumerate}

If $\fg= \mathfrak{g}_2(2)$, then
\begin{enumerate}
\item the difference $\mathcal{I}-\mathcal{\tilde I}$ is uniquely determined by a section $G$ of $$\ba_0\times_{G_{0,I}} \fg_{-1,-1}^*\otimes \fg_{0,1}$$ that depends on one complex function $G^{\bar 1}_{11}$ that is antiholomorphic along the leaves of the Levi kernel.
\item The function $G^{\bar 1}_{11}$ and the harmonic curvature of the underlying $(2,3,5)$ geometry are the only fundamental invariants.
\item This is a generic case of seven--dimensional submanifolds in $\C^{5}$ with two--dimensional Levi kernel.
\end{enumerate}
\end{thm*}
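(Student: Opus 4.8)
The plan is to study the difference $A:=\mathcal{I}-\mathcal{\tilde I}$ directly. By Proposition \ref{under} the two complex structures on $\mathcal{D}$ are induced from the \emph{same} infinitesimal pseudo--$\fg_0$--structure of type $\fg_-$, so they agree on $\mathcal{K}$ and on $\mathcal{D}/\mathcal{K}$ and both preserve the Levi kernel $\mathcal{K}$. First I would record the elementary consequences: writing $\mathcal{I}=\mathcal{\tilde I}+A$ and using $\mathcal{I}^2=\mathcal{\tilde I}^2=-\id$ gives $\mathcal{\tilde I}A+A\mathcal{\tilde I}+A^2=0$; since $A$ kills $\mathcal{K}$ and maps $\mathcal{D}$ into $\mathcal{K}$, one has $A^2=0$, hence $A$ anticommutes with $\mathcal{\tilde I}$, i.e. $A$ is a complex \emph{antilinear} bundle map $\mathcal{D}/\mathcal{K}\to\mathcal{K}$. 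Passing to the complexification and the bigrading $\fg_{a,b}$, the map $A$ sends the $(1,0)$--part $\fg_{-1,-1}$ of $\mathcal{D}/\mathcal{K}$ into the $(0,1)$--part $\fg_{0,1}$ of $\mathcal{K}$, so its holomorphic part is a section $G$ of $\ba_0\times_{G_{0,I}}\fg_{-1,-1}^*\otimes\fg_{0,1}$, and reality determines $A$ from $G$. This is the tensor in the statement.

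Next I would impose formal integrability of $\mathcal{I}$. Tilting by $G$ replaces $\mathcal{D}^{1,0}$ by the graph $\{v+c\,Gv:v\in\fg_{-1,-1}\}\oplus\fg_{0,-1}$ for a nonzero constant $c$, and this tilted space has no $\fg_{-1,0}$--component. The leading (symbol) involutivity condition $[\mathcal{D}^{1,0},\mathcal{D}^{1,0}]\subseteq\mathcal{D}^{1,0}$ therefore forces the $\fg_{-1,0}$--part of $[v+cGv,\,w+cGw]$ to vanish. Using the bigrading bracket relations $\fg_{-2,-2}=\fg_{0,2}=0$ and $[\fg_{-1,-1},\fg_{0,1}]\subseteq\fg_{-1,0}$, all other contributions drop out and the condition becomes $\Phi(G)=0$, where
$$\Phi(G)(v,w):=[Gv,w]-[Gw,v]\in\fg_{-1,0},\qquad v,w\in\fg_{-1,-1},$$
is a $G_{0,I}$--equivariant map $\fg_{-1,-1}^*\otimes\fg_{0,1}\to\Lambda^2\fg_{-1,-1}^*\otimes\fg_{-1,0}$. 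Thus the algebraically admissible differences are exactly $\ker\Phi$, and $\Phi$ is automatically alternating in $(v,w)$.

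The dichotomy of the theorem is then the computation of $\ker\Phi$. When $\dim_\C\fg_{-1,-1}\geq 2$ I would show, going through the entries of Tables \ref{realclasA} and \ref{realclasB} by the root data of the bigrading, that $\Phi$ is injective; hence $G=0$ and $\mathcal{I}=\mathcal{\tilde I}$. Once the two structures coincide, the underlying parabolic Cartan connection $\tom$ of Proposition \ref{under}, viewed over $M=\ba/G_{0,I}\exp(\fp_+)$, is itself a Cartan connection of type $(G,G_{0,I}\exp(\fp_+))$ solving the equivalence problem by Theorem \ref{abspar}; Theorem \ref{harm} then identifies its admissible nonzero harmonic curvature with the cases of Table \ref{realclasC}, and the absence of any free deformation makes $(\fg_-,I,\fk)$ non--generic. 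This yields the four claims when $\fg\neq\mathfrak{g}_2(2)$. The point is that $\dim_\C\fg_{-1,-1}=1$ occurs, among the cases treated here, only for $\mathfrak{g}_2(2)$ with $\Sigma_1=\{\alpha_1\}$, $\Sigma_2=\{\alpha_2\}$, where $\fg_{-1,-1}$, $\fg_{0,1}$ and $\fg_{-1,0}$ are all one--dimensional; there $\Lambda^2\fg_{-1,-1}^*=0$, so $\Phi\equiv 0$ and $\ker\Phi=\fg_{-1,-1}^*\otimes\fg_{0,1}$ is one--dimensional, giving a single complex function $G^{\bar 1}_{11}$.

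For $\fg=\mathfrak{g}_2(2)$ it remains to extract the extra integrability content and to collect the invariants. Since the symbol obstruction is now vacuous, I would expand the involutivity condition to the next homogeneity, where the surviving condition is a single first--order equation along the fibres of $U\to N$, i.e. along the leaves of $\mathcal{K}$; I expect it to read as $\bar\partial G^{\bar 1}_{11}=0$ there, so that $G^{\bar 1}_{11}$ is antiholomorphic along the Levi kernel. Building $\om$ from $\tom$ and $G$ via Theorem \ref{abspar} and computing the essential part of its curvature, the invariants split into the harmonic curvature of the underlying $(2,3,5)$ geometry (the $\mathfrak{g}_2(2)$ entry of Table \ref{realclasC}) and the covariant derivatives of $G^{\bar 1}_{11}$; as neither can be produced from the other, these are precisely the fundamental invariants, and the free function $G^{\bar 1}_{11}$ makes the case generic. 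The hard part will be twofold: establishing the injectivity of $\Phi$ uniformly across all tabulated cases with $\dim_\C\fg_{-1,-1}\geq 2$ (the bulk of the representation--theoretic work), and carrying the $\mathfrak{g}_2(2)$ integrability expansion to the order at which the antiholomorphicity of $G^{\bar 1}_{11}$ becomes visible.
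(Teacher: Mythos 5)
Your core strategy coincides with the paper's own proof: encode $\mathcal{I}-\mathcal{\tilde I}$ as a section $G$ of $\ba_0\times_{G_{0,I}}\fg_{-1,-1}^*\otimes\fg_{0,1}$, extract from $[\mathcal{D}^{10},\mathcal{D}^{10}]\subseteq\mathcal{D}^{10}$ the algebraic obstruction $[Gv,w]-[Gw,v]=0$ (the paper's condition $G^{\bar j}_{ba}-G^{\bar j}_{ab}=0$), conclude $G=0$ hence $\mathcal{I}=\mathcal{\tilde I}$ in the non--exceptional cases, and for $\mathfrak{g}_2(2)$ reduce the surviving differential obstruction to $K^{\bar 1}_1(G^{\bar 1}_{11})=0$, i.e.\ antiholomorphicity along the leaves of $\mathcal{K}$, with $G^{\bar 1}_{11}$ then entering the curvature of $\om$ as the extra fundamental invariant. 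One caution on the middle step: the dichotomy is \emph{not} governed by $\dim_\C\fg_{-1,-1}$. For $\frak{sp}(2N,\R)$, $\Sigma_1=\{1\}$ --- excluded from this theorem but present in Table \ref{realclasA} --- one has $\dim_\C\fg_{-1,-1}=N-1\geq 2$ and yet $\ker\Phi\cong S^3\C^{N-1}\neq 0$ (totally symmetric tensors; this is exactly what Section \ref{KapSp} exploits). So ``$\Phi$ injective whenever $\dim\geq 2$'' is not a principle you can establish uniformly; the paper instead derives, from nondegeneracy of the pairing $\fg_{0,1}\otimes\fg_{-1,-1}\to\fg_{-1,0}$ and irreducibility of $\fg_{0,1}$, necessary conditions for $\ker\Phi\neq 0$ (irreducibility of $\fg_{-1,-1}$ as a $\fg_{0,0}$--module, no semisimple factor of $\fg_{0,0}$ acting only on $\fg_{-1,-1}$, and $\dim\fg_{0,1}\geq\dim\fg_{-1,-1}$) and then checks that only the $\frak{sp}(2N,\R)$ and $\mathfrak{g}_2(2)$ entries of the tables satisfy them; you should organize your case check around such conditions rather than around $\dim_\C\fg_{-1,-1}$.

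The genuine gap is in the genericity assertions, claim (4) for $\fg\neq\mathfrak{g}_2(2)$ and claim (3) for $\fg=\mathfrak{g}_2(2)$. Your justification --- ``the absence of any free deformation makes $(\fg_-,I,\fk)$ non--generic'' and, dually, ``the free function $G^{\bar 1}_{11}$ makes the case generic'' --- is a non sequitur. Genericity here is a pointwise algebraic property of the second--order Levi--Tanaka algebra: whether every point near a point carrying $(\fg_-,I,\fk)$ automatically carries an isomorphic algebra. It neither implies nor is implied by rigidity of the integrable complex structure relative to $\mathcal{\tilde I}$, so nothing you have proved about $\ker\Phi$ yields these claims. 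The paper proves them by a separate algebraic analysis that your proposal omits entirely: first, $\fg_-$ itself is generic only when it is the contact algebra ($k=2$, $\dim\fg_{-2}=1$) or the $(2,3,5)$--algebra ($k=3$ with dimensions $2,1,2$), which already disposes of all entries whose grading is of neither type; second, for the surviving contact--type entries one passes to the full algebra $\tilde\fg_0$ of grading--preserving derivations of $\fg_-$ and studies the $\tilde G_{0,I}$--orbit of $\fk$ inside the complex antilinear derivations --- for the $\mathfrak{sl}(n+1,\R)$ entries there exist antilinear derivations in $\tilde\fg_0$ outside $\fg_0$, and for the contact cases other than $(Sp(2N,\R),P_1)$ one has $\tilde\fg_0=\frak{csp}(2N-2,\R)$ with $\frak{u}(p,N-1-p)$ failing to act transitively, so $\fk$ is not generic; whereas for $\mathfrak{g}_2(2)$ (and $(Sp(2N,\R),P_1)$) one has $\tilde\fg_0=\fg_0$ and genericity holds. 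Without supplying this analysis, two of the theorem's seven assertions remain unproved in your proposal.
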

\begin{proof}
Suppose that $\mathcal{\tilde I}$ is the complex structure on $\mathcal{D}$ corresponding to the underlying regular parabolic geometry $(\ba\to N,\tilde\om)$ of type $(G,P)$. Without loss of generality, we assume that $\om$ is an absolute parallelism on $\ba$ solving the equivalence problems for the 2--nondegenerate CR geometry with second--order Levi--Tanaka algebra $(\fg_-,I,\fk)$ constructed in \cite[Section 4.2]{Gr19} for a  pullback by a Weyl structure $\si$ of the underlying parabolic geometry. We compare the complexifications $\si^*\om$ and $\si^*\tilde \om$ with respect to the bigrading $\fg_{a,b}$ given by the sets $\Sigma_1,\Sigma_2$ from the Tables  \ref{realclasA} and \ref{realclasB}.

We have shown in \cite[Section 4.2]{Gr19} that $\si^*(\tilde \om-\om)$ has positive homogeneity w.r.t. the first of the gradings and thus $\mathcal{I}$ and $\mathcal{\tilde I}$ coincide on $\mathcal{K}$ and $\mathcal{D}/\mathcal{K}$. If we consider a complex frame $L_i$ of $$\si^*\tilde\om^{-1}(\fg_{-1,-1}\oplus (\fp_+\otimes \C))$$ on complefication of $T\ba_0$ given by a choice of basis of $\fg_{-1,-1}$, then we can write each element of $$\si^*\tilde\om^{-1}(\fg_{0,1}\oplus (\fp_+\otimes \C))$$ on complefication of $T\ba_0$ as a linear combination of nilpotent elements $\bar K^i_{\bar j}$ corresponding to elements of $\frak{gl}(\fg_{-1,0}\oplus \fg_{-1,-1})$ mapping $L_i$ onto $\bar L_{\bar j}$, where $\bar L_{\bar j}$ is the conjugated basis of  $\si^*\tilde\om^{-1}(\fg_{-1,0}\oplus (\fp_+\otimes \C))$. Then we can write $$L_i+G(L_i)=L_i+G^{\bar j}_{ik}\bar K^k_{\bar j}$$ for the corresponding complex frame of  $\si^*\om^{-1}(\fg_{-1,-1}\oplus (\fp_+\otimes \C))$, where $G^{\bar j}_{ik}$ defines a section $G$ of $$\ba_0\times_{G_{0,I}} \fg_{-1,-1}^*\otimes \fg_{0,1}\subset \fg_{-1,-1}^*\otimes\fg_{-1,-1}^*\otimes \fg_{-1,0},$$ because $[L_a,\bar K^a_{\bar j}]=\bar L_{\bar j}$. In order to have the integrability of the CR geometry, we need to check that $[L_a+G(L_a),L_b+G(L_b)]$ and $[L_a+G(L_a),K^{\bar b}_c]$ are contained in $$\si^*\om^{-1}(\fg_{-1,-1}\oplus \fg_{0,-1}\oplus \fg_{0,0}\oplus (\fp_+\otimes \C)).$$ If we decompose $[L_a+G(L_a),L_b+G(L_b)]$ as
\begin{align*}
&([L_a,L_b]+G([L_a,L_b])-(G^{\bar j}_{ba}-G^{\bar j}_{ab})\bar G(\bar L_{\bar j}))\\
&+(G^{\bar j}_{ba}-G^{\bar j}_{ab})(\bar L_{\bar j}+\bar G(\bar L_{\bar j}))+((L_a(G^{\bar j}_{bk})-L_b(G^{\bar j}_{ak}))\bar K^k_{\bar j}-G([L_a,L_b])),
\end{align*}
then only the first line is not an obstruction to integrability. The first term on the second line is in $\si^*\om^{-1}(\fg_{-1,0}\oplus(\fp_+\otimes \C))$ provides an algebraic  obstruction for integrability and the second term on the second line is in  $\si^*\om^{-1}( \fg_{0,1}\oplus (\fp_+\otimes \C))$ provides a differential obstruction for integrability.  Similarly, $$[L_a+G(L_a),K^{\bar b}_c]=K^{\bar b}_c(G^{\bar j}_{ak})\bar K^k_{\bar j}$$ is the final obstruction for integrability.

Since the pairing $\fg_{0,1}\otimes \fg_{-1,-1}\to \fg_{-1,0}$ given by Lie bracket is non--degenerate and $\fg_{0,1}$ is irreducible $\fg_{0,0}$--module, the conditions $$G^{\bar j}_{ba}-G^{\bar j}_{ab}=0$$ provide strong restrictions on the CR geometry. Firstly, there should be no semisimple component of $\fg_{0,0}$ that acts only on $\fg_{-1,-1}$, and $\fg_{-1,-1}$ should also be irreducible $\fg_{0,0}$--module. Moreover, $dim(\fg_{0,1})$ should be larger or equal to $dim(\fg_{-1,-1})$. 

The cases in Table \ref{realclasA} that satisfy these conditions are only the cases with $\fg=\frak{sp}(2N,\mathbb{R}),\Sigma_1=\{1\}$.

The cases in Table \ref{realclasB} that satisfy these conditions are only the cases with $\fg=\mathfrak{g}_2(2),\Sigma_1=\{1\}$. We can observe that $G^{\bar j}_{ba}-G^{\bar j}_{ab}=0$ is satisfied in this case due to dimensional reasons.

Now, we discuss the genericity of the second--order Levi--Tanaka algebra $(\fg_-,I,\fk)$ corresponding to the cases from the Tables  \ref{realclasA} and \ref{realclasB}. Firstly, we recall that $\fg_-$ is generic in our situation only if:
\begin{enumerate}
\item $k=2$ and $dim(\fg_{-2})=1$,
\item $k=3$ and $dim(\fg_{-1})=2$, $dim(\fg_{-2})=1$, $dim(\fg_{-3})=2$.
\end{enumerate}
Further, let us fix $\fg_-$ and consider $\tilde \fg_0$ consisting of grading preserving derivations of $\fg_-$. We need to look on $\tilde G_{0,I}$--orbits of $\fk$ inside of the space of complex antilinear derivation in $\tilde \fg_0$. In all cases except
\begin{enumerate}
\item $k=2$ and $dim(\fg_{-2})=1$ other than $(Sp(2N,\mathbb{R}),P_{1})$, 
\item $(Sl(N+1,\mathbb{R}),P_{1,1+2s})$,
\end{enumerate}
$\tilde \fg_0=\fg_0$ and the genericity of $\fk$ is trivially satisfied.

In the case $(Sl(N+1,\mathbb{R}),P_{1,2s})$, $\tilde \fg_0\neq \fg_0$, because there are additional complex antilinear derivations in $\tilde \fg_0$ in a different $\fg_0$--module than $\fg_0$ (which contains all complex linear derivations) and thus $\fk$ is not generic in these cases.

In the case $k=2$ and $dim(\fg_{-2})=1$ is $\tilde \fg_0= \frak{csp}(2N-2,\mathbb{R})$, and $\fg_{0,I}'=\frak{u}(p,N-1-p)$ does not act transitively on the space of complex antilinear derivations in $\tilde \fg_0$. Therefore, with exception of the case $(Sp(2N,\mathbb{R}),P_{1})$, these cases are not generic.

This proves the first claim.

For the second claim, we observe that the only (due to dimensional reasons) integrability condition is $$K^{\bar 1}_1(G^{\bar 1}_{11})=0,$$ i.e., the complex function $G^{\bar 1}_{11}$ is antiholomorphic along the leaves of $\mathcal{K}$. If we write $$\si^*\om=(\id+G+\bar G+F)\circ \si^*\tilde\om,$$ where $F:\fg_-\oplus \fk\to \fg$ corresponds to change of normalization, then $$d(\si^*\om)+[\si^*\om,\si^*\om]=d((\id+G+\bar G+F)\circ \si^*\tilde\om)+[(\id+G+\bar G+F)\circ \si^*\tilde\om,(\id+G+\bar G+F)\circ \si^*\tilde\om]$$ provides the change of the structure functions. If we insert $L_i$ and $\si^*\tilde\om^{-1}(Y)$ for $Y\in \fg_{-3,-2}$ in the the complexification of the structure function, then we get $G^{\bar 1}_{11}[\si^*\om(\bar K_{\bar 1}^1),Y]$ in $\fg_{-3,-1}$, because 
\begin{gather*}
d\si^*\tilde\om(L_i,Y)+[ \si^*\tilde\om(L_i), \si^*\tilde\om(Y)]=0,\\
d((G+\bar G+F)\circ \si^*\tilde\om)\in \fg^{-2}\otimes \C,\\
[\si^*\tilde\om(L_i),(G+\bar G+F)\circ \si^*\tilde\om(Y)]\subset \fg_{-3,-2}\oplus \fg^{-2}\otimes \C\\
\end{gather*}
 and $[(G+\bar G+F)\circ \si^*\tilde\om(L_i),(\id+G+\bar G+F)\circ \si^*\tilde\om(Y)]=[\si^*\tilde\om(G(L_i)),Y]=G^{\bar 1}_{11}[\si^*\om(\bar K_{\bar 1}^1),Y]$ modulo  $\fg_{-3,-2}\oplus \fg^{-2}\otimes \C$. Therefore $G^{\bar 1}_{11}$ is the additional fundamental invariant of the CR geometry.
\end{proof}

Let us provide an example of the CR geometry with $\fg= \mathfrak{g}_2(2)$ and nontrivial fundamental invariant different from the harmonic curvature. We start with the flat model for which we can obtain the following defining equations using \cite[Proposition 3.7]{Gr19}:

\begin{align*}
\Re(w_1)&={\frac {- \left( { \bar z_1} {{ z_2}}^{2}+{{ \bar z_2}}^{2}{z_1
}+2 { \bar z_2} { z_2} \right)  \sqrt{2}}{2({ \bar z_1} { z_1}-1)}}\\
\Re(w_2)&={\frac {3\sqrt{2} \left( -{ \bar z_1} { z_2}+{ \bar z_2} { z_1
} \right) { \Im(w_1)}}{{4 \bar z_1} { z_1}}}i-{\frac {2 \left( { z_1}
-{ \bar z_1} \right) { \Im(w_2)}}{{ \bar z_1} { z_1}}}i+{\frac { \left( 
 \left( { z_1}+2 \right) { \bar z_1}+2 { z_1} \right) { \Im(w_3)}}{{
 \bar z_1} { z_1}}}\\
 &+\frac {-{{ \bar z_1}}^{3}{ z_1} {{ z_2}}
^{3}+ \left( 3 { z_2} { \bar z_2}  \left( { \bar z_2}+{ z_2}
 \right) {{ z_1}}^{2}+ \left( 6 { \bar z_2} {{ z_2}}^{2}+4 {{ 
z_2}}^{3} \right) { z_1}+5 {{ z_2}}^{3} \right) {{ \bar z_1}}^{2}}{{4 \bar z_1} { z_1}  \left( { \bar z_1} { z_1}-1 \right) ^
{2}}\\
&+\frac{-
 \left( {{ \bar z_2}}^{2}{{ z_1}}^{3}+ \left( -4 {{ \bar z_2}}^{2}-6 {
 \bar z_2} { z_2} \right) {{ z_1}}^{2}-9 { z_2}  \left( { 
\bar z_2}+{ z_2} \right) { z_1}-6 {{ z_2}}^{2} \right) { \bar z_2} {
 \bar z_1}+5 {{ \bar z_2}}^{3}{{ z_1}}^{2}+6 {{ \bar z_2}}^{2}{ z_1} 
{ z_2}}{{4 \bar z_1} { z_1}  \left( { \bar z_1} { z_1}-1 \right) ^
{2}}\\
\Re(w_3)&= {\frac {-3\sqrt {2} \left( { \bar z_1} { z_2}+{ \bar z_2} { z_1}
 \right) { \Im(w_1)}}{{4 \bar z_1} { z_1}}}-{\frac { \left(  \left( { 
z_1}-2 \right) { \bar z_1}-2 { z_1} \right) { \Im(w_2)}}{{ \bar z_1} { 
z_1}}}+{\frac {2\left( { z_1}-{ \bar z_1} \right) { \Im(w_3)}}{{ 
\bar z_1} { z_1}}}i\\
&+\frac {i{ z_1} {{ z_2}}^{3}{{ \bar z_1}}^{3}
+i{ z_2}  \left(3 { \bar z_2}  \left( -{ \bar z_2}+{ z_2} \right) 
{{ z_1}}^{2}+ \left( 4 {{ z_2}}^{2}-2 { z_2} { \bar z_2}
 \right) { z_1}-5 {{ z_2}}^{2} \right) {{ \bar z_1}}^{2}}{{4 \bar z_1} { z_1}  \left( { \bar z_1} { z_1}-1
 \right) ^{2}}\\
 &+\frac{ -i
 \left( {{ \bar z_2}}^{2}{{ z_1}}^{3}+ \left( 4 {{ \bar z_2}}^{2}-6 {
 z_2} { \bar z_2} \right) {{ z_1}}^{2}-9 { z_2}  \left({ z_2} -{ 
\bar z_2} \right) { z_1}+6 {{ z_2}}^{2} \right) { \bar z_2} {
 \bar z_1}+i \left(5 { \bar z_2} { z_1}+6 { z_2} \right) {{ 
\bar z_2}}^{2}{ z_1}}{{4 \bar z_1} { z_1}  \left( { \bar z_1} { z_1}-1
 \right) ^{2}}
\end{align*}

We pick the following complex frame of $\mathcal{D}/\mathcal{K}$:

\begin{align*}
\mathcal{L}_1&:=-\frac{\bar z_1 z_2+\bar z_2}{\bar z_1 z_1-1}\frac{\partial}{\partial z_2}
+\frac{-i \sqrt{2} (\bar z_1^2 z_2^2+2 \bar z_1 \bar z_2 z_2+\bar z_2^2) }{(\bar z_1 z_1-1)^2}\frac{\partial}{\partial \Im(w_1)}\\
&+[-3\sqrt{2}\Im(w_1)\frac{
 \bar z_2 + \bar z_1^3  z_1^2 z_2-2 \bar z_1^2 z_1 z_2
-2 \bar z_1 \bar z_2 z_1+\bar z_1^2 \bar z_2 z_1^2+ \bar z_1 z_2}{16(\bar z_1 z_1-1) (\bar z_1^2 z_1^2-2 \bar z_1 z_1+1)}\\
&+3i\frac{\bar z_1^3 z_1 z_2^3-\bar z_1^2 \bar z_2^2 z_1^2 z_2+4 \bar z_1^3 z_2^3-\bar z_1^2 \bar z_2 z_1 z_2^2-\bar z_1 \bar z_2^3 z_1^2+12 z_2^2 \bar z_2 \bar z_1^2-3 \bar z_1^2 z_2^3-3 \bar z_1 \bar z_2^2 z_1 z_2}{16(\bar z_1 z_1-1) (\bar z_1^2 z_1^2-2 \bar z_1 z_1+1)}\\
&+3i\frac{12 z_2 \bar z_2^2 \bar z_1-5 \bar z_1 \bar z_2 z_2^2-\bar z_2^3 z_1+4 \bar z_2^3-2 \bar z_2^2 z_2}{16(\bar z_1 z_1-1) (\bar z_1^2 z_1^2-2 \bar z_1 z_1+1)}]\frac{\partial}{\partial \Im(w_2)}\\
&+[3i\sqrt{2}\Im(w_1)\frac{
- \bar z_1^3  z_1^2 z_2- \bar z_1^2 \bar z_2 z_1^2+2 \bar z_1^2  z_1 z_2- \bar z_1  z_2- \bar z_2 +2  \bar z_1 \bar z_2 z_1}{16(\bar z_1 z_1-1) (\bar z_1^2 z_1^2-2 \bar z_1 z_1+1)}\\
&-3\frac{-2 \bar z_2^2 z_2-4 \bar z_1^3 z_2^3-5 \bar z_1 \bar z_2 z_2^2-3 \bar z_1^2 z_2^3-\bar z_1^2 \bar z_2^2 z_1^2 z_2+\bar z_1^3 z_1 z_2^3-12 \bar z_1 \bar z_2^2 z_2-\bar z_1 \bar z_2^3 z_1^2-4 \bar z_2^3}{16(\bar z_1 z_1-1) (\bar z_1^2 z_1^2-2 \bar z_1 z_1+1)}\\
&-3\frac{-12 \bar z_1^2 \bar z_2 z_2^2-3 \bar z_1 \bar z_2^2 z_1 z_2-\bar z_1^2 \bar z_2 z_1 z_2^2-\bar z_2^3 z_1}{16(\bar z_1 z_1-1) (\bar z_1^2 z_1^2-2 \bar z_1 z_1+1)}]\frac{\partial}{\partial \Im(w_3)}
\end{align*}

and the following complex frame of $\mathcal{K}$:

\begin{align*}
\mathcal{K}^{\bar 1}_1&:=-\frac{(\bar z_1 z_2+\bar z_2) (\bar z_1 z_1-1)}{\bar z_2 z_1+z_2}\frac{\partial}{\partial z_1}
-\frac{(\bar z_1 z_2+\bar z_2)^2}{\bar z_2 z_1+z_2}\frac{\partial}{\partial z_2}\\
&-i\sqrt{2}\frac{(\bar z_1 z_2+\bar z_2)^3}{2(\bar z_2 z_1+z_2) (\bar z_1 z_1-1)}\frac{\partial}{\partial \Im(w_1)}\\
&+i(\bar z_1 z_2+\bar z_2)[ \Im(w_3)\frac{\bar z_1  z_1-1}{16(\bar z_2 z_1+z_2) z_1}+i\Im(w_2)\frac{\bar z_1 z_1-1}{2(\bar z_2 z_1+z_2) z_1}+\frac{3i\sqrt{2}\Im(w_1)}{16 z_1}\\
&+\frac{12 \bar z_1^2 \bar z_2 z_1 z_2^2+3 \bar z_1 \bar z_2^2 z_1^2 z_2-3 \bar z_1 \bar z_2 z_1 z_2^2+12 \bar z_1 \bar z_2^2 z_1 z_2+4 \bar z_2^3 z_1+4 \bar z_1^3 z_1 z_2^3-9 \bar z_2^2 z_1 z_2}{16(\bar z_2 z_1+z_2) z_1 (\bar z_1^2 z_1^2-2 \bar z_1 z_1+1)}\\
&+\frac{3 \bar z_1^2 z_1 z_2^3+\bar z_1 \bar z_2^3 z_1^3-5 \bar z_1 z_2^3-6 \bar z_2 z_2^2-3 \bar z_2^3 z_1^2+3 \bar z_1^2 \bar z_2 z_1^2 z_2^2}{16(\bar z_2 z_1+z_2) z_1 (\bar z_1^2 z_1^2-2 \bar z_1 z_1+1)}]\frac{\partial}{\partial \Im(w_2)}\\
&+i(\bar z_1 z_2+\bar z_2)[- \Im(w_2)\frac{\bar z_1  z_1-1}{2(\bar z_2 z_1+z_2) z_1}+i\Im(w_3)\frac{\bar z_1  z_1-1}{2(\bar z_2 z_1+z_2) z_1}-\frac{3 \sqrt{2} \Im(w_1)}{16 z_1}\\
&+i\frac{-4\bar z_1^3 z_1 z_2^3-9 \bar z_2^2 z_1 z_2+3 \bar z_1^2 z_1 z_2^3-4 \bar z_2^3 z_1-6z_2^2 \bar z_2-5 \bar z_1 z_2^3-3 \bar z_2^3 z_1^2}{16(\bar z_2 z_1+z_2) z_1 (\bar z_1^2 z_1^2-2 \bar z_1 z_1+1)}\\
&+i\frac{-12 \bar z_1 \bar z_2^2 z_1 z_2+3 \bar z_1^2 \bar z_2 z_1^2 z_2^2+3 \bar z_1 \bar z_2^2 z_1^2 z_2-3 \bar z_1 \bar z_2 z_1 z_2^2+ \bar z_1 \bar z_2^3 z_1^3-12\bar z_1^2 \bar z_2 z_1 z_2^2}{16(\bar z_2 z_1+z_2) z_1 (\bar z_1^2 z_1^2-2 \bar z_1 z_1+1)}]\frac{\partial}{\partial \Im(w_3)}.
\end{align*}

It is not hard to verify that $\mathcal{L}_1$ and $ \mathcal{\bar L}_{\bar 1}$ generate graded Lie algebra isomorphic to $\fg_-$ of $(2,3,5)$ distribution and that $\mathcal{K}^{\bar 1}_1$ maps $\mathcal{\bar L}_{\bar 1}$ on $\mathcal{L}_1$ (modulo Levi kernel).

Now, we change the complex structure by taking $$\mathcal{L}_1+G^{\bar 1}_{11}\mathcal{\bar K}^1_{\bar 1}$$ as a complex frame of $\mathcal{D}/\mathcal{K}$. In this chosen frame, the integrability condition of the CR geometry takes form $$K^{\bar 1}_1(G^{\bar 1}_{11})=\mathcal{K}^{\bar 1}_{1}(G^{\bar 1}_{11})+\frac{2\bar z_1(\bar z_1z_2+\bar z_2)}{\bar z_2z_1+\bar z_1}G^{\bar 1}_{11}=0,$$ where we determined $K^{\bar 1}_1$ using \cite[Proposition 4.2]{Gr19}. The solver of differential equations in Maple software did not provide general solution of this PDE. Assuming that $G^{\bar 1}_{11}$ does not depend on the w's we get a class of solutions
$$G^{\bar 1}_{11}=\frac{F(\bar z_1,\bar z_2,\frac{\bar z_1 z_2+\bar z_2}{\bar z_1(\bar z_1 z_1-1)})}{(\bar z_1 z_1-1)^2}$$
depending on one function $F$ of three variables. By construction, this is the only fundamental invariant of this CR geometry.

\section{Fundamental invariants in $\frak{sp}(2N,\mathbb{R}),\Sigma_1=\{1\}$ case}\label{KapSp}

In the $\frak{sp}(2N,\mathbb{R}),\Sigma_1=\{1\}$ case (independently of the choice of $\fg_{0,I}'=\frak{u}(p,N-1-p)$), we can (locally) construct a pseudo--$G_0$--structure of type $\fg_-$ on the leaf space with leafs corresponding to integral manifolds of the Levi kernel $\mathcal{K}$. However, this is not enough to determine a contact projective geometry, i.e., there are many possible underlying parabolic geometries of type $(Sp(2N,\mathbb{R}),P_{1})$. Since the fundamental invariants for the case $N=2$ are well--known, see \cite{IZ13,Poc13,KK19}, we consider only the case $N>2$. 

The bigrading of $\frak{sp}(2N,\C)$ takes the following $(1,N-1,N-1,1)$--block matrix form:

\begin{gather*}
\left[ \begin{array}{cccc}
W&\bar L^{*\bar i}&L^{*i}&J^*_{i\bar i}\\
\bar L_{\bar i}&V^{\bar i}_{\bar j}&\bar K_{\bar i}^j& L^*_i\\
L_i&K_i^{\bar j}&-V^j_i& -\bar L^*_{\bar i}\\
J^{i\bar i}&L^i& -\bar L^{\bar i}& -W
\end{array}\right]\subset \left[ \begin{array}{cccc}
\fg_{0,0}&\fg_{1,0}&\fg_{1,1}&\fg_{2,1}\\
\fg_{-1,0}&\fg_{0,0}&\fg_{0,1}&*\\
\fg_{-1,-1}&\fg_{0,-1}&*& *\\
\fg_{-2,-1}&*&*& *
\end{array}\right],
\end{gather*}
where $W,J,J^*\in \C$, $L,\bar L,L^*,\bar L^*\in \C^{N-1}$, $K\in S^2\C^{N-1}$, $V\in  \C^{N-1}\otimes  \C^{N-1}$ and the indices indicate, how these vectors are represented as rows, columns, and matrices (i.e., the $*$ entries in the second matrix are dependent on the other entries). In what follows, we do not need know, how $\frak{sp}(2N,\mathbb{R})$ sits as a real form in $\frak{sp}(2N,\C)$. However, let us emphasize that this real form depends on the signature of the Levi form, i.e., on $\fg_{0,I}'=\frak{u}(p,N-1-p)$.

Let emphasize that for a chosen Weyl structure, the entries $J^{i\bar i},J^*_{i\bar i}$ represent the Levi--form that can be used to lower and raise indices, i.e., $K_i^{\bar j}J^*_{j\bar j}=K_{ij}=K_{ji}=K_j^{\bar i}J^*_{i\bar i}$ is a symmetric matrix. Therefore the algebraic integrability condition from proof of Theorem \ref{fin1} can be interpreted such that $G_{ik}^{\bar j}J^*_{j\bar j}=G_{ijk}$ is totally symmetric.

\begin{thm*}\label{fin2}
Let $(M,\mathcal{D},\mathcal{I})$ be a 2--nondegenerate CR geometry with second--order Levi--Tanaka algebra $(\fg_-,I,\fk)$ corresponding to $\frak{sp}(2N,\mathbb{R}),\Sigma_1=\{1\},N>2$ case. Then $\bar K_{\bar l\bar m}(G_{ijk})$ decomposes into irreducible submodules of $S^3\fg_{-1,-1}^*\otimes S^2 \fg_{-1,0}^*$:
\begin{enumerate}
\item to (a complexification of) a fundamental invariant in the highest weight component,
\item to a section $\om^{-1,0}_{0,1}$ of $\ba_0\times_{G_{0,I}}\fg_{-1,0}^*\otimes \fg_{0,1}$ in the other nonzero components,
\end{enumerate}
where $\bar K^l_{\bar m}$ are the constant vector fields for the Maurer--Cartan form on (an open subset of) $\ba_0$ provided by subgroup $\exp(\fg_-)\rtimes G_{0,I}$ of the automorphisms of a flat contact projective geometry acting (locally) transitively on $\ba_0$.

The remaining (complexification of)  fundamental invariant is $$K_{ij}(\om^{-1,-1}_{0,-1})+\om^{-1,0}_{0,1}\bullet K_{ij},$$ where $\om^{-1,-1}_{0,-1}$ is conjugated to $\om^{-1,0}_{0,1}$ and $\bullet$ is an action of $\om^{-1,0}_{0,1}$ on $\fg_{0,-1}$ given by the (adjoint) action of a (symmetric) matrix in $\fg_{0,0}$ on $\fg_{0,-1}$.

This is a generic case of $N^2+N-1$--dimensional $2$--nondegenerate CR submanifolds in $\C^{\frac{N(N+1)}{2}}$ with $(N-1)N$--dimensional Levi--kernel.
\end{thm*}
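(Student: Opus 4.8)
The plan is to carry out the comparison of $\mathcal{I}$ with a flat reference structure exactly as in the proof of Theorem \ref{fin1}, and then to exploit the fact, recorded in Section \ref{KapSp}, that the underlying contact projective geometry is \emph{not} determined by the CR data. First I would recall from the proof of Theorem \ref{fin1} that here the algebraic integrability condition $G^{\bar j}_{ba}-G^{\bar j}_{ab}=0$ holds, so that lowering the index with the Levi form $J^*_{i\bar i}$ produces the totally symmetric tensor $G_{ijk}=G^{\bar j}_{ik}J^*_{j\bar j}\in S^3\fg_{-1,-1}^*$. Since the pseudo--$G_0$--structure of type $\fg_-$ only determines a contact projective geometry up to the choice of a partial Weyl connection, I would fix once and for all the \emph{flat} contact projective geometry as a local reference. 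Its automorphism group contains $\exp(\fg_-)\rtimes G_{0,I}$ acting locally transitively on $\ba_0$, and the associated Maurer--Cartan form supplies the constant frame $L_i,\ \bar K^l_{\bar m}$ with respect to which all the derivatives in the statement are taken; because the reference is flat no harmonic curvature enters, so every fundamental invariant must come from $G$ and its derivatives.

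The core of the argument is to differentiate $G_{ijk}$ along the Levi--kernel directions $\bar K_{\bar l\bar m}\in\fg_{0,1}$, reusing the identities for the final obstruction $\bar K^{\bar b}_c(G^{\bar j}_{ak})$ from the proof of Theorem \ref{fin1} and reading off the homogeneity one part of $d(\si^*\om)+[\si^*\om,\si^*\om]$. The resulting object $\bar K_{\bar l\bar m}(G_{ijk})$ lives in $S^3\fg_{-1,-1}^*\otimes S^2\fg_{-1,0}^*$, and I would decompose this space into irreducible $\fg_{0,0}$--modules by successively contracting a $\fg_{-1,0}^*$ index against a $\fg_{-1,-1}^*$ index using the Levi form. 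The highest weight (Cartan) summand carries no trace; I would argue that no admissible change of the reference contact projective connection can alter it, so it is a genuine fundamental invariant, which is item (1). Every trace summand lands, after the identification $\fg_{0,1}\cong S^2\fg_{-1,-1}^*$, inside $\fg_{-1,0}^*\otimes\fg_{0,1}$, which is exactly the space parametrizing the freedom of the contact projective connection; I would show that the normalization conditions force these summands to agree with the corresponding component of $\si^*\om$, thereby defining the section $\om^{-1,0}_{0,1}$ of item (2) as data of $\om$ rather than as an invariant.

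It then remains to produce the last invariant and to settle genericity. With $\om^{-1,0}_{0,1}$ determined by the previous step, I would differentiate the Levi form $K_{ij}$ in the conjugate direction $\om^{-1,-1}_{0,-1}$ and correct it by the action $\bullet$ of $\om^{-1,0}_{0,1}$, viewed through the Levi form as a symmetric element of $\fg_{0,0}$, on $\fg_{0,-1}$, forming the covariant--type combination $K_{ij}(\om^{-1,-1}_{0,-1})+\om^{-1,0}_{0,1}\bullet K_{ij}$. I would check that the two terms transform oppositely under the residual freedom left after the first two steps, so that the sum descends to the remaining fundamental invariant. For the final sentence I would invoke the genericity analysis of Theorem \ref{fin1}: here $\fg_-$ is the contact grading ($k=2$, $\dim\fg_{-2}=1$) with $\tilde{\fg}_0=\mathfrak{csp}(2N-2,\R)$, which is precisely the exceptional generic case $(Sp(2N,\R),P_1)$ in which $\fk$ is not contained in $\fg_0$; the dimension count $\dim M=N^2+N-1$, $\dim\mathcal{K}=(N-1)N$, $M\subset\C^{N(N+1)/2}$ then follows from the bigrading block sizes.

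The step I expect to be the main obstacle is the clean separation in the second paragraph of the pieces of $\bar K_{\bar l\bar m}(G_{ijk})$ absorbed by the choice of contact projective connection from the genuinely invariant Cartan piece: since the underlying parabolic geometry is not canonically fixed here, one must track precisely how every admissible modification of the flat reference propagates through the normalization conditions, and prove that it acts transitively on the trace summands while fixing the highest weight summand. A secondary difficulty is verifying that the combination in the final display is well defined, which requires controlling the interaction of $\om^{-1,0}_{0,1}$ with the further normalization of $\si^*\om$ and with the structure equations for $K_{ij}$.
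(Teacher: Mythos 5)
Your construction of the two invariants follows essentially the same route as the paper: fix the flat contact projective geometry as reference, encode $\mathcal{I}-\mathcal{\tilde I}$ in the symmetric tensor $G_{ijk}$, differentiate along the Maurer--Cartan frame of $\exp(\fg_-)\rtimes G_{0,I}$, split off the highest weight part as the first invariant, use the trace parts to pin down $\om^{-1,0}_{0,1}$, and extract the second invariant from the conjugate component. However, there is a genuine gap: you never prove that this list of invariants is \emph{complete}, i.e.\ that if the two invariants vanish then the canonical absolute parallelism $\om$ is flat and the CR geometry is locally the model. This is not a formality --- it is the bulk of the paper's proof. The paper writes $\sigma^*\omega=(\id+\omega^{i,j}_{k,l})\circ(\sigma^*\tilde\om+G)$ with unknown sections $\omega^{i,j}_{k,l}$ of $\ba_0\times_{G_{0,I}}\fg_{i,j}^*\otimes\fg_{k,l}$, and then works through the normalization conditions on the curvature components $R_{i,j;k,l}^{m,n}$ homogeneity by homogeneity (degrees $1$ through $4$), determining every $\omega^{i,j}_{k,l}$ and verifying (with computer algebra) that all remaining curvature components vanish once the two invariants do. Your substitute for this --- ``the reference is flat, so no harmonic curvature enters, hence every fundamental invariant must come from $G$ and its derivatives'' --- does not close the gap: even granting that all invariants are built from $G$, you must still show that the two specific quantities you exhibit \emph{generate} all the others under invariant differentiation, i.e.\ that their vanishing propagates through the structure equations to kill the curvature in all higher homogeneities. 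Without that, the theorem's claim that these are \emph{the} fundamental invariants is unproven.

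Two secondary points. First, your choice of flat reference needs a justification that it is canonical: the paper gets this from Theorem \ref{harm}, which in the $\frak{sp}(2N,\mathbb{R})$, $\Sigma_1=\{1\}$, $N>2$ case (absent from Table \ref{realclasC}) says the flat contact projective structure is the \emph{only} one whose correspondence-space complex structure $\mathcal{\tilde I}$ is integrable; this is what makes $G$ and everything built from it well defined independently of choices, and it largely dissolves the ``main obstacle'' you flag about tracking modifications of the reference. Second, you misread the final display: $K_{ij}(\om^{-1,-1}_{0,-1})$ is the derivative of the section $\om^{-1,-1}_{0,-1}$ along the constant vector fields $K_{ij}$, not a derivative of the Levi form in some direction; in the paper this term arises as the curvature component $R_{-1,-1;0,-1}^{0,-1}=d_{0,-1}\om^{-1,-1}_{0,-1}+[\om^{-1,-1}_{0,0}\circ\theta_{-1,-1},\theta_{0,-1}]$ after the normalization $\om^{-1,-1}_{0,0}=\om^{-1,0}_{0,1}$, so its invariance is automatic (it is a curvature component of the canonical parallelism) rather than something to be checked by a transformation argument as you propose.
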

\begin{proof}
As in the proof of Proposition \ref{under}, for every $x$, we consider $\ba_0$ over sufficiently small neighborhood $U$ as an open subset $N\times G_0$, where $N$ is the leaf space with leafs corresponding to integral manifolds of the Levi kernel $\mathcal{K}$. Now, different $G_0$--equivariant extensions of the pseudo one--forms $\theta_i$ to $\fg_{-}$--valued one--forms on $T(N\times G_0)$ such that the property (4) of pseudo--$G_0$--structure holds for the extensions modulo $\fg_{i+j+1}$ define different contact projective geometries, see \cite[Section 4.2]{parabook}. Among them, there is a flat contact projective geometry $(\ba\to N,\tilde \om)$ which is according to Theorem \ref{harm} the only contact projective structure such that the corresponding complex structure $\mathcal{\tilde I}$ on the correspondence space is integrable. So as before, we obtain a section $G$ of $\ba_0\times_{G_{0,I}} \fg_{-1,-1}^*\otimes \fg_{0,1}$ corresponding to $\mathcal{I}-\mathcal{\tilde I}$. 

Since the normalization of the flat contact projective geometry $(\ba\to N,\tilde \om)$ and the absolute parallelism $\om$ solving the equivalence problem of CR structures from \cite[Section 4.2]{Gr19} differ, we measure the difference of the complexifications of the pullbacks $\sigma^*\omega,\sigma^*\tilde \om$ by a Weyl structure $\sigma:\ba_0\to \ba$ of the contact projective geometry using the following formula $$\sigma^*\omega=(\id+\omega^{i,j}_{k,l})\circ (\sigma^*\tilde \om+G),$$ where $\omega^{i,j}_{k,l}$ are sections of $\ba_0\times_{G_{0,I}} \fg_{i,j}^*\otimes \fg_{k,l}$. A priory, $\sigma^*\omega$ has the following block structure
\begin{gather*}
\left[ \begin{array}{cccc}
0&\om^{-2,-1}_{1,0}& \om^{-2,-1}_{1,1}& \om^{-2,-1}_{2,1}\\
0&\om^{-2,-1}_{0,0}&\om^{-2,-1}_{0,1}& *\\
0& \om^{-2,-1}_{0,-1}&* & *\\
1& 0& 0& 0
\end{array}\right]\circ \theta_{-2,-1}  \\
+\left[ \begin{array}{cccc}
\om'{}^{-1,-1}_{0,0}&\om^{-1,-1}_{1,0}& \om^{-1,-1}_{1,1}& \om^{-1,-1}_{2,1}\\
0&\om^{-1,-1}_{0,0}&0& *\\
1& \om^{-1,-1}_{0,-1}&* & *\\
0& 0& 0& *
\end{array}\right]\circ \theta_{-1,-1}\\
+\left[ \begin{array}{cccc}
\om'{}^{-1,0}_{0,0}&\om^{-1,0}_{1,0}& \om^{-1,0}_{1,1}& \om^{-1,0}_{2,1}\\
1&\om^{-1,0}_{0,0}&\om^{-1,0}_{0,1}& *\\
0& 0&* & *\\
0& 0& 0& *
\end{array}\right]\circ \theta_{-1,0}\\
+ \left[ \begin{array}{cccc}
0&\om^{0,-1}_{1,0}& \om^{0,-1}_{1,1}& \om^{0,-1}_{2,1}\\
0&0&0& *\\
0&1&0 & *\\
0& 0& 0& 0
\end{array}\right]\circ(\theta_{0,-1}-\bar G^j_{\bar i\bar k}\theta_{-1,0})\\
+ \left[ \begin{array}{cccc}
0&\om^{0,1}_{1,0}& \om^{0,1}_{1,1}& \om^{0,1}_{2,1}\\
0&0&1& *\\
0& 0&0 & *\\
0& 0& 0& 0
\end{array}\right]\circ(\theta_{0,1}-G^{\bar j}_{ik}\theta_{-1,-1})\\
+\omega_{G_{0,0}},
\end{gather*}
where $\theta_{i,j}$ is the decomposition of the complexification of the pseudo one--forms $\theta_i$ according to the bigrading and $\omega_{G_{0,0}}$ is the complexification of Maurer--Cartan form on $G_{0,I}$. Let us emphasize that the component
\begin{gather*}
\left[ \begin{array}{cccc}
\omega_{G_{0,0}}&0&0&0\\
\theta_{-1,0}&\omega_{G_{0,0}}&\theta_{0,1}& *\\
\theta_{-1,-1}&\theta_{0,-1}&* & *\\
\theta_{-2,-1}& *& *& *
\end{array}\right]\\
\end{gather*}
is the complexification of the Maurer--Cartan form on the Lie group $\exp(\fg_-)\rtimes G_{0,I}\subset \ba_0$ provided by the automorphisms of the flat model $(\ba\to N,\tilde \om)$ on an open subset of $\ba_0$. We evaluate the structure equations on the constant vector fields w.r.t. this Maurer--Cartan form and obtain sections $R_{i,j;k,l}^{m,n}$ of $$\ba_0\times_{G_{0,I}} \fg_{i,j}^*\otimes \fg_{k,l}^*\otimes \fg_{m,n}.$$

We proceed by investigating the normalization conditions on $R_{i,j;k,l}^{m,n}$  according to their homogeneity w.r.t. the grading given by $\Sigma_1=\{1\}$.

In homogeneity $1$, we can use the vanishing of $R_{-2,-1;0,1}^{-1,-1},R_{-2,-1;-1,-1}^{-2,-1},R_{-2,-1;0,1}^{-1,0}$ and their conjugates to normalize $$\om^{0,-1}_{1,0}=0, \om^{0,-1}_{1,1}=0, \om^{0,1}_{1,0}=0,  \om^{0,1}_{1,1}=0, \om'{}^{-1,0}_{0,0}=0, \om'{}^{-1,-1}_{0,0}=0.$$ Further, we can use vanishing of $R_{-1,0;-1,-1}^{-1,-1}$ and their conjugates to normalize $$\om^{-1,0}_{0,0}=-\om^{-1,-1}_{0,-1}, \om^{-1,-1}_{0,0}=\om^{-1,0}_{0,1}.$$ Thus it remains to normalize $\om^{-1,0}_{0,1}$ and $\om^{-1,-1}_{0,-1}$ that are conjugated.

What remains in homogeneity $1$ is $$R_{-1,-1;0,1}^{0,1}=d_{0,1}G+\om^{-1,0}_{0,1}([\theta_{-1,-1},\theta_{0,1}])+[\om^{-1,-1}_{0,0}\circ \theta_{-1,-1},\theta_{0,1}]$$ and $$R_{-1,-1;0,-1}^{0,-1}=d_{0,-1}\om^{-1,-1}_{0,-1}+[\om^{-1,-1}_{0,0}\circ \theta_{-1,-1},\theta_{0,-1}]$$ and their conjugates. We can decompose $d_{0,1}G$ to highest weight component, which provides the first fundamental invariant and we can normalize to $0$ the remaining components using $$\om^{-1,0}_{0,1}.$$ The computation shows that the remaining component vanishes. Therefore only $R_{-1,-1;0,-1}^{0,-1}$ provides the second invariant.

So it remains to show that there are no further fundamental invariants, i.e., to show that, if we assume that these invariants vanish, then $\om$ is flat Cartan connection. We can compute that there is no nonzero $R_{i,j;k,l}^{m,n}$  in homogeneity $1$ under this assumption. We can not present the further computations in detail, because there are too many differential consequences of the integrability conditions and the assumption of the vanishing of the above invariant. We used the computer algebra program Maple with its DifferentialGeometry package \cite{dgpack} for these computations.

In homogeneity $2$,  we can use the vanishing of $R'{}_{-2,-1;0,1}^{0,0},R_{-2,-1;-1,0}^{-1,-1},R_{-2,-1;-1,-1}^{-1,-1}$ and their conjugates to normalize \begin{gather*}\om^{0,1}_{2,1}=0,  \om^{0,-1}_{2,1}=0,\\
 \om^{-1,-1}_{1,0}=\om^{-2,-1}_{0,0}, \om^{-1,-1}_{1,1}=\om^{-2,-1}_{0,1}, \om^{-1,0}_{1,0}=-\om^{-2,-1}_{0,-1}, \om^{-1,0}_{1,1}=(\om^{-2,-1}_{0,0})^T.\end{gather*} Then we can use vanishing of $R_{-1,-1;-1,-1}^{0,0}$ and its conjugate to determine $$\om^{-2,-1}_{0,-1},\om^{-2,-1}_{0,1}$$ and we can use vanishing of $R_{-1,-1;0,1}^{1,0},R_{-1,-1;0,1}^{1,1},R_{-2,-1;0,1}^{0,1}$ and their conjugates to determine $$\om^{-2,-1}_{0,0}.$$ After this, one can check that all other components of $R_{i,j;k,l}^{m,n}$  in homogeneity $2$ vanish under our assumptions.

In homogeneity $3$,  we can use the vanishing of $R'{}_{-2,-1;-1,-1}^{0,0}$ and its conjugate to normalize  $$\om^{-1,0}_{2,1}=\om^{-2,-1}_{1,0}, \om^{-1,-1}_{2,1}=\om^{-2,-1}_{1,1}.$$ Then we can use vanishing of $R_{-1,-1;-1,0}^{1,0}$ and their conjugates to determine $$\om^{-2,-1}_{1,0},\om^{-2,-1}_{1,1}.$$ After this, one can check that all other components of $R_{i,j;k,l}^{m,n}$  in homogeneity $3$ vanish under our assumptions.

In  homogeneity $4$, we can use vanishing of $R_{-1,-1;-1,0}^{2,1}$ to determine $$\om^{-2,-1}_{2,1}.$$ This completely determines $\si^*\om$ and  one can check that all $R_{i,j;k,l}^{m,n}$ vanish, i.e., highest weight component of $R_{-1,-1;0,1}^{0,1}$ and $R_{-1,-1;0,-1}^{0,-1}$ are the fundamental invariants of these CR geometries.

The genericity follows from the discussion in the proof of Theorem \ref{fin1}.
\end{proof}

Let us provide an example for the case $N=3$.  We start with the flat model, for which we can obtain the following defining equation using \cite[Proposition 3.7]{Gr19}:

\begin{align*}
\Re(w)&=\frac{ 1}
 {P}(-2{ 
\bar z_1} { z_1}-2{ 
\bar z_2} { z_2}+\left( { \bar z_5} { z_3} { z_5}-{ \bar z_5} {{ z_4}}^{2}-{ 
z_3} \right) {{ \bar z_1}}^{2}+ \left( { \bar z_3} { \bar z_5} { z_5}-{{ \bar z_4}}^{2}{
 z_5}-{ \bar z_3} \right) {{ z_1}}^{2}\\
& -2\left( { \bar z_4} { z_3} { z_5}- { \bar z_4} {{ z_4}}^{2}+ { z_4} \right) { \bar z_1} { \bar z_2}-2\left(  { \bar z_3} { 
\bar z_5} { z_4}- {{ \bar z_4}}^{2}{ z_4}+ { \bar z_4} \right) { z_1
} { z_2}\\
&+2 \left(  { \bar z_4} { z_4}+ { \bar z_5} { z_5} \right) { 
\bar z_1} { z_1}
-2 \left( { \bar z_4} { z_3}+ { \bar z_5} { z_4}
 \right) { \bar z_1} { z_2}
-2 \left( 
{ \bar z_3} { z_4}+ { \bar z_4} { z_5} \right) { \bar z_2} { z_1}
\\
&+2 \left( { \bar z_3} { z_3}+{ \bar z_4} { z_4} \right) { 
\bar z_2} { z_2}+ \left( { \bar z_3} { \bar z_5} { z_3}-{{ \bar z_4}}^{2}{
 z_3}-{ \bar z_5} \right) {{ z_2}}^{2} + \left( { \bar z_3} { z_3} { z_5}-{
 \bar z_3} {{ z_4}}^{2}-{ z_5} \right) {{ \bar z_2}}^{2}),\\
 P&:={ \bar z_3} { \bar z_5} { z_3} { z_5}-{ \bar z_3} { \bar z_5} {{ 
z_4}}^{2}-{{ \bar z_4}}^{2}{ z_3} { z_5}+{{ \bar z_4}}^{2}{{ z_4}}^{
2}-{ \bar z_3} { z_3}-2 { \bar z_4} { z_4}-{ \bar z_5} { z_5}+1,
\end{align*}

We pick the following complex frame of $\mathcal{D}/\mathcal{K}:$

\begin{align*}
\mathcal{L}_1&:=\frac{\partial}{\partial z_1}\sqrt{P}-i\frac{\partial}{\partial \Im(w)}\frac{2Q}{\sqrt{P}}\\
\mathcal{L}_2&:=\frac{\partial}{\partial z_1}T+\frac{\partial}{\partial z_2}S-i\frac{\partial}{\partial \Im(w)}2U\\
 Q&:={ \bar z_3}\,{ \bar z_5}\,{ z_1}\,{ z_5}-{ \bar z_3}\,{ \bar z_5}\,{ z_2
}\,{ z_4}-{{ \bar z_4}}^{2}{ z_1}\,{ z_5}+{{ \bar z_4}}^{2}{ z_2}
\,{ z_4}+{ \bar z_1}\,{ \bar z_4}\,{ z_4}+{ \bar z_1}\,{ \bar z_5}\,{ 
z_5}\\
&-{ \bar z_2}\,{ \bar z_3}\,{ z_4}-{ \bar z_2}\,{ \bar z_4}\,{ z_5}-{
 z_1}\,{ \bar z_3}-{ \bar z_4}\,{ z_2}-{ \bar z_1}\\
 S&:=\bar z_4z_4+\bar z_5z_5-1\\
 T&:=\bar z_4z_3+\bar z_5z_4\\
 U&:=\bar z_4z_1+\bar z_5z_2+\bar z_2
\end{align*}
and the following complex frame of $\mathcal{K}:$

\begin{align*}
\mathcal{K}^{\bar 1}_1&:=-\frac{\partial}{\partial z_1}\frac{Q}{S}-\frac{\partial}{\partial z_3}\frac{P}{S}+i\frac{\partial}{\partial \Im(w)}\frac{Q^2}{SP}\\
\mathcal{K}^{\bar 1}_2&+\mathcal{K}^{\bar 2}_1:=-\frac{\partial}{\partial z_1}(\frac{2TQ}{\sqrt{P}S}-\frac{Q'}{\sqrt{P}})-\frac{\partial}{\partial z_2}\frac{Q}{\sqrt{P}}-\frac{\partial}{\partial z_3}\frac{2T\sqrt{P}}{S}-\frac{\partial}{\partial z_4}\sqrt{P}\\
&+i\frac{\partial}{\partial \Im(w)}(\frac{2Q^2T}{P\sqrt{P}S}+\frac{2QQ'}{P\sqrt{P}})\\
\mathcal{K}^{\bar 2}_2&:=-\frac{\partial}{\partial z_1}(\frac{QT^2}{PS}-\frac{QT}{P})-\frac{\partial}{\partial z_2}(\frac{QT}{P}-\frac{Q'S}{P})-\frac{\partial}{\partial z_3}\frac{T^2}{S}-\frac{\partial}{\partial z_4}T-\frac{\partial}{\partial z_5}S\\
&+i\frac{\partial}{\partial \Im(w)}(\frac{Q^2T^2}{P^2S}+\frac{2QQ'T}{P^2}-\frac{Q'^2S}{P^2})\\
&Q':=\bar z_3\bar z_5z_1z_4-\bar z_3\bar z_5z_2z_3-\bar z_4^2z_1z_4+\bar z_4^2z_2z_3+\bar z_1\bar z_4z_3+\bar z_1\bar z_5z_4-\bar z_2\bar z_3z_3-\bar z_2\bar z_4z_4\\
&+\bar z_4z_1+\bar z_5z_2+\bar z_2
\end{align*}

It is not hard to verify that $\mathcal{L}_1,\mathcal{L}_2$ and $\mathcal{\bar L}_1, \mathcal{\bar L}_2$ generate graded Lie algebra isomorphic to $\fg_-$ of a contact distribution and that $\mathcal{K}^{\bar 1}_1,\mathcal{K}^{\bar 1}_2+\mathcal{K}^{\bar 2}_1,\mathcal{K}^{\bar 2}_2$ map appropriate $\mathcal{\bar L}_{\bar i}$ on $\mathcal{L}_j$ (modulo Levi kernel).

We construct complexification of the pullback $s^*\si^*\om$, where $s: M\to \ba_0$ is the (local) section provided by our choice of the frame. We use the notation of the proof of Theorem \ref{fin2} and assume that $s^*\theta_{-2,-1},s^*\theta_{-1,-1},s^*\theta_{0,-1}$ is the dual coframe to our frame. As in \cite[Section 4.1]{Gr19}, we firstly (re)construct the infinitesimal $\fg_0$--structure, i.e., determine the image of $s^*\theta_{0,-1}$ in $\fg_{0,0}$ by vanishing of the components $$\mathcal{R}_{i,j;k,l}^{m,n}:=s^*R_{i,j;k,l}^{m,n}$$ of homogeneity $0,1$ w.r.t. the bigrading
\begin{gather*}
\left[ \begin{array}{cccc}
W&0&0&0\\
s^*\theta_{-1,0}&V^{\bar i}_{\bar j}&s^*\theta_{0,1}& *\\
s^*\theta_{-1,-1}&s^*\theta_{0,-1}&* & *\\
s^*\theta_{-2,-1}& *& *& *
\end{array}\right],\\
W:=\frac{\sqrt{P}\bar z_4}{2S}(k^{\bar 1}_2+k^{\bar 2}_1)+\frac{\sqrt{P}z_4}{2S}(\bar k^{1}_{\bar  2}+\bar k^{2}_{\bar 1})+\frac{\bar z_4T+\bar z_5S}{2S}k^{\bar 2}_2+\frac{z_4\bar T+z_5S}{2S}\bar k^2_{\bar 2}\\
V^{\bar 1}_{\bar 1}=\frac{\bar z_4\bar T-\bar z_3S}{2S}k^{\bar 1}_1+\frac{-z_4 T+z_3S}{2S}\bar k^1_{\bar 1}-\frac{\sqrt{P}\bar z_4}{2S}(k^{\bar 1}_2+k^{\bar 2}_1)+\frac{\sqrt{P}z_4}{2S}(\bar k^{1}_{\bar  2}+\bar k^{2}_{\bar 1})\\
V^{\bar 1}_{\bar 2}=\frac{\bar z_4\bar T-\bar z_3S}{2S}(k^{\bar 1}_2+k^{\bar 2}_1)-\frac{\sqrt{P}\bar z_4}{2S}k^{\bar 2}_2\\
V^{\bar 2}_{\bar 1}=-\bar V^{\bar 1}_{\bar 2}\\
V^{\bar 2}_{\bar 2}=-\frac{\sqrt{P}\bar z_4}{2S}(k^{\bar 1}_2+k^{\bar 2}_1)+\frac{\sqrt{P}z_4}{2S}(\bar k^{1}_{\bar  2}+\bar k^{2}_{\bar 1})-\frac{\bar z_4T+\bar z_5S}{2S}k^{\bar 2}_2+\frac{z_4\bar T+z_5S}{2S}\bar k^2_{\bar 2}
\end{gather*}
where $k^{\bar i}_j$ are dual to $K^{\bar i}_j$.

Now, we change the complex structure by taking $$\mathcal{L}_i+G^{\bar j}_{ik}\mathcal{\bar K}^k_{\bar j}$$ as a complex frame of $\mathcal{D}/\mathcal{K}$. The integrability conditions are quite complicated in this case and the PDE solver in Maple software did not provide us their general solution. Assuming, $G^{\bar j}_{ik}=0$ except $G^{\bar 1}_{11}$ and assuming that $G^{\bar 1}_{11}$ does not depend on $w$ provides two sets of solutions of integrability conditions. We present here the simpler one that depends on one complex function $F$ of four variables:
$$
G^{\bar 1}_{11}=\frac{F(\bar z_1,\bar z_3,\bar z_4,\bar z_5)S^2}{\sqrt{P^3}}.
$$
We normalize the components $\mathcal{R}_{i,j;k,l}^{m,n}$ in homogeneity $1$ w.r.t. the grading given by $\Sigma_1$ as in Theorem \ref{fin2} and obtain
\begin{gather*}
s^*\theta_{-1,-1}\circ \left[ \begin{array}{cccc}
0&?&?&?\\
0&\om^{-1,0}_{0,1}&-G_{ijk}& *\\
1& \om^{-1,-1}_{0,-1}&* & *\\
0& 0& 0& *
\end{array}\right]+s^*\theta_{-1,0}\circ \left[ \begin{array}{cccc}
0&?&?&?\\
1&- \om^{-1,-1}_{0,-1}&\om^{-1,0}_{0,1}& *\\
0& -\bar G_{ijk}&* & *\\
0& 0& 0& *
\end{array}\right]\\
+s^*\theta_{-2,-1}\circ \left[ \begin{array}{cccc}
0&?&?&?\\
0&?&?& *\\
0&?&* & *\\
1& 0& 0& 0
\end{array}\right]+\left[ \begin{array}{cccc}
W&0&0&0\\
0&V^{\bar i}_{\bar j}&s^*\theta_{0,1}& 0\\
0&s^*\theta_{0,-1}&* & 0\\
0&0& 0& *
\end{array}\right],\\
\end{gather*}
where $?$ are components of higher homogeneity we do not need to compute,
\begin{align*}
(\om^{-1,0}_{0,1})_{11}^{1}&=-\bar F\frac{S(\bar z_4\bar T-\bar z_3S)}{\sqrt{P^3}}+\bar F_{z_3}\frac{3S}{10\sqrt{P}}+\bar F_{z_1}\frac{3SQ}{10\sqrt{P^3}}\\
(\om^{-1,0}_{0,1})_{21}^{1}&=-\bar F\frac{S\bar z_4}{P}+\bar F_{z_3}\frac{2ST}{5P}+\bar F_{z_4}\frac{S^2}{5P}+\bar F_{z_1}\frac{S(2QT-SQ')}{5P^2}\\
(\om^{-1,0}_{0,1})_{12}^{1}&=0\\
(\om^{-1,0}_{0,1})_{22}^{1}&=-\bar F_{z_3}\frac{S}{20\sqrt{P}}-\bar F_{z_1}\frac{SQ}{20\sqrt{P^3}}\\
(\om^{-1,0}_{0,1})_{12}^{2}&=0\\
(\om^{-1,0}_{0,1})_{22}^{2}&=0,
\end{align*}
and $\om^{-1,-1}_{0,-1}$ is conjugated to $\om^{-1,0}_{0,1}$.

We compute that there are the following nontrivial fundamental invariants
\begin{align*}
(\mathcal{R}_{-1,-1;0,1}^{0,1})_{11}^1{}^1_1&=-\frac12(\mathcal{R}_{-1,-1;0,1}^{0,1})_{21}^2{}^1_1=-F_{\bar z_1}\frac{\bar QS}{10\sqrt{P^3}}-F_{\bar z_3}\frac{S}{10\sqrt{P}}\\
(\mathcal{R}_{-1,-1;0,1}^{0,1})_{11}^2{}^1_1&=-2(\mathcal{R}_{-1,-1;0,1}^{0,1})_{22}^2{}^1_1=- F_{\bar z_3}\frac{4S\bar T}{5P}-F_{\bar z_4}\frac{2S^2}{5P}-2F_{\bar z_1}\frac{S(2\bar Q\bar T-S\bar Q')}{5P^2}\\
(\mathcal{R}_{-1,-1;0,1}^{0,1})_{11}^2{}^1_2&=-2(\mathcal{R}_{-1,-1;0,1}^{0,1})_{22}^2{}^1_2=F_{\bar z_1}\frac{\bar QS}{5\sqrt{P^3}}+F_{\bar z_3}\frac{S}{5\sqrt{P}}\\
(\mathcal{R}_{-1,-1;0,1}^{0,1})_{12}^2{}^1_1&=-F_{\bar z_1}\frac{S\bar T\bar U}{\sqrt{P^3}}-F_{\bar z_5}\frac{S^3}{\sqrt{P^3}}-F_{\bar z_4}\frac{S^2\bar T}{\sqrt{P^3}}-F_{\bar z_3}\frac{S\bar T^2}{\sqrt{P^3}}\\
 \end{align*}
 \begin{align*}
(\mathcal{R}_{-1,-1;0,-1}^{0,-1})_{11}^1{}^1_1&=F\frac{2S(-z_4 T+z_3S)}{\sqrt{P^3}}+\bar F\frac{2(-\bar z_4 \bar T+\bar z_3S)^2}{\sqrt{P^3}}-F_{\bar z_1}\frac{3\bar QS}{5\sqrt{P^3}}-F_{\bar z_3}\frac{3S}{5\sqrt{P}}\\
&-\bar F_{z_1}\frac{13Q(-\bar z_4 \bar T+\bar z_3S)}{10\sqrt{P^3}}-\bar F_{z_3}\frac{13(-\bar z_4 \bar T+\bar z_3S)}{10\sqrt{P}}+\bar F_{z_1z_1}\frac{3Q^2}{10\sqrt{P^3}}\\
&+\bar F_{z_1z_3}\frac{3Q}{5\sqrt{P}}+\bar F_{z_3z_3}\frac{3\sqrt{P}}{10}\\
(\mathcal{R}_{-1,-1;0,-1}^{0,-1})_{21}^1{}^1_1&=\bar F\frac{2(-\bar z_4 \bar T+\bar z_3S)\bar z_4}{P}-\bar F_{z_1}\frac{(7P\bar z_4+4S(\bar z_5\bar T-\bar z_4S))Q-2Q'(-\bar z_4 \bar T+\bar z_3S)S}{5P^2}\\
&-\bar F_{z_3}\frac{(7P\bar z_4+4S(\bar z_5\bar T-\bar z_4S))}{5P^2}-\bar F_{z_4}\frac{2S(-\bar z_4 \bar T+\bar z_3S)}{5P}\\
&+\bar F_{z_1z_1}\frac{Q(QS+2PU)}{5P^2}+\bar F_{z_1z_3}\frac{Q(QS+2PU)}{5P}+\bar F_{z_3z_3}\frac{2T}{5}\\
&+\bar F_{z_1z_4}\frac{QS}{5P}+\bar F_{z_3z_4}\frac{S}{5}\\
(\mathcal{R}_{-1,-1;0,-1}^{0,-1})_{11}^1{}^1_2&=F\frac{z_4S}{P}-F_{\bar z_1}\frac{S(2\bar Q\bar T-S\bar Q')}{5P^2}-F_{\bar z_3}\frac{2S \bar T}{5P}-F_{\bar z_4}\frac{S^2}{5P}\\
(\mathcal{R}_{-1,-1;0,-1}^{0,-1})_{21}^1{}^1_2&=F_{\bar z_1}\frac{\bar QS}{20\sqrt{P^3}}+F_{\bar z_3}\frac{S}{20\sqrt{P}}+\bar F_{z_1}\frac{Q(-\bar z_4 \bar T+\bar z_3S)}{20\sqrt{P^3}}+\bar F_{z_3}\frac{(-\bar z_4 \bar T+\bar z_3S)}{20\sqrt{P}}\\
&-\bar F_{z_1z_1}\frac{Q^2}{20\sqrt{P^3}}-\bar F_{z_1z_3}\frac{Q}{10\sqrt{P}}-\bar F_{z_3z_3}\frac{\sqrt{P}}{20}\\
(\mathcal{R}_{-1,-1;0,-1}^{0,-1})_{11}^2{}^1_1&=\bar F\frac{2(-\bar z_4 \bar T+\bar z_3S)\bar z_4}{P}-\bar F_{z_1}\frac{(7P\bar z_4+4S(\bar z_5\bar T-\bar z_4S))Q-2Q'(-\bar z_4 \bar T+\bar z_3S)S}{10P^2}\\
&-\bar F_{z_3}\frac{(7P\bar z_4+4S(\bar z_5\bar T-\bar z_4S))}{10P^2}-\bar F_{z_4}\frac{4S(-\bar z_4 \bar T+\bar z_3S)}{5P}\\
&+\bar F_{z_1z_1}\frac{3Q(QS+2PU)}{10P^2}+\bar F_{z_1z_3}\frac{3Q(QS+2PU)}{10P}+\bar F_{z_3z_3}\frac{3T}{5}\\
&+\bar F_{z_1z_4}\frac{3QS}{10P}+\bar F_{z_3z_4}\frac{3T}{10}\\
(\mathcal{R}_{-1,-1;0,-1}^{0,-1})_{21}^2{}^1_1&=\bar F\frac{2\bar z_4^2}{\sqrt{P}}-\bar F_{z_1}\frac{2(3\bar z_4SQ'+Q(5\bar z_4T-\bar z_5S))}{5\sqrt{P^3}}-\bar F_{z_3}\frac{2(5\bar z_4T-\bar z_5S)}{5\sqrt{P}}-\bar F_{z_4}\frac{6\bar z_4S}{5\sqrt{P}}\\
&+\bar F_{z_1z_1}\frac{(2TQ-SQ')^2}{5\sqrt{P^5}}+\bar F_{z_1z_3}\frac{4(2TQ-SQ')T}{5\sqrt{P^3}}+\bar F_{z_3z_3}\frac{4T^2}{5\sqrt{P}}\\
&+\bar F_{z_1z_4}\frac{2S(2TQ-SQ')}{5\sqrt{P^3}}+\bar F_{z_3z_4}\frac{4TS}{5\sqrt{P}}+\bar F_{z_4z_4}\frac{S^2}{5\sqrt{P}}\\
(\mathcal{R}_{-1,-1;0,-1}^{0,-1})_{11}^2{}^1_2&=F\frac{S(-z_4 T+z_3S)}{\sqrt{P^3}}+\bar F\frac{S^2}{\sqrt{P^3}}-F_{\bar z_1}\frac{\bar QS}{4\sqrt{P^3}}-F_{\bar z_3}\frac{S}{4\sqrt{P}}\\
&-\bar F_{z_1}\frac{7Q(-\bar z_4 \bar T+\bar z_3S)}{20\sqrt{P^3}}-\bar F_{z_3}\frac{7(-\bar z_4 \bar T+\bar z_3S)}{20\sqrt{P}}\\
(\mathcal{R}_{-1,-1;0,-1}^{0,-1})_{21}^2{}^1_2&=\bar F\frac{(-\bar z_4 \bar T+\bar z_3S)\bar z_4}{P}-\bar F_{z_1}\frac{(7P\bar z_4+4S(\bar z_5\bar T-\bar z_4S))Q-2Q'(-\bar z_4 \bar T+\bar z_3S)S}{20P^2}\\
&-\bar F_{z_3}\frac{(7P\bar z_4+4S(\bar z_5\bar T-\bar z_4S))}{20P^2}-\bar F_{z_4}\frac{2S(-\bar z_4 \bar T+\bar z_3S)}{5P}\\
&-\bar F_{z_1z_1}\frac{Q(QS+2PU)}{20P^2}-\bar F_{z_1z_3}\frac{Q(QS+2PU)}{20P}-\bar F_{z_3z_3}\frac{2T}{10}\\
&-\bar F_{z_1z_4}\frac{QS}{20P}-\bar F_{z_3z_4}\frac{S}{20}\\
 \end{align*}
  \begin{align*}
(\mathcal{R}_{-1,-1;0,-1}^{0,-1})_{11}^2{}^2_2&=F\frac{2z_4S}{P}-F_{\bar z_1}\frac{S(2\bar Q\bar T-S\bar Q')}{5P^2}-F_{\bar z_3}\frac{4S\bar T}{5P}-F_{\bar z_4}\frac{2S^2}{5P}\\
(\mathcal{R}_{-1,-1;0,-1}^{0,-1})_{21}^2{}^2_2&=F_{\bar z_1}\frac{\bar QS}{10\sqrt{P^3}}+F_{\bar z_3}\frac{S}{10\sqrt{P}}+\bar F_{z_1}\frac{Q(-\bar z_4 \bar T+\bar z_3S)}{10\sqrt{P^3}}+\bar F_{z_3}\frac{(-\bar z_4 \bar T+\bar z_3S)}{10\sqrt{P}}\\
(\mathcal{R}_{-1,-1;0,-1}^{0,-1})_{12}^2{}^1_1&=\bar F_{z_1}\frac{-\bar z_4SQ'+T(2\bar z_4Q-5U(\bar z_3S-\bar z_4\bar T))}{5\sqrt{P^3}}\\
&-\bar F_{z_3}\frac{T(5T(-\bar z_4 \bar T+\bar z_3S)-2\bar z_4P)}{5\sqrt{P^3}}-\bar F_{z_4}\frac{S(5T(-\bar z_4 \bar T+\bar z_3S)-\bar z_4P)}{5\sqrt{P^3}}\\
&-\bar F_{z_5}\frac{S^2(-\bar z_4 \bar T+\bar z_3S)}{\sqrt{P^3}}+\bar F_{z_1z_1}\frac{3TQU}{10\sqrt{P^3}}+\bar F_{z_1z_3}\frac{3T(QT+PU)}{10\sqrt{P^3}}\\
&+\bar F_{z_3z_3}\frac{3T^2}{10\sqrt{P}}+\bar F_{z_3z_5}\frac{3S^2}{10\sqrt{P}}+\bar F_{z_1z_4}\frac{3TQS}{10\sqrt{P^3}}+\bar F_{z_1z_5}\frac{3QS^2}{10\sqrt{P^3}}\\
&+\bar F_{z_3z_4}\frac{3ST}{10\sqrt{P}}\\
(\mathcal{R}_{-1,-1;0,-1}^{0,-1})_{22}^2{}^1_1&=\bar F_{z_1}\frac{SUP\bar z_4+T(4SQ'\bar z_4-(3\bar z_4T-\bar z_5S)Q)}{5P^2}-\bar F_{z_3}\frac{T(3\bar z_4T-2\bar z_5S)}{5P}\\
&-\bar F_{z_4}\frac{S(4\bar z_4T-\bar z_5 S)}{5P}-\bar F_{z_5}\frac{S^2\bar z_4}{P}+\bar F_{z_1z_1}\frac{TU(QT+UP)}{5P^2}\\
&+\bar F_{z_1z_3}\frac{T^2(QT+3PU)}{5P^2}+\bar F_{z_3z_3}\frac{2T^3}{5P}+\bar F_{z_3z_5}\frac{2S^2T}{5P}+\bar F_{z_1z_4}\frac{ST(QT+2UP)}{5P^2}\\
&+\bar F_{z_1z_5}\frac{S^2(QT+UP)}{5P^2}+\bar F_{z_3z_4}\frac{3ST^2}{5P}+\bar F_{z_4z_4}\frac{S^2T}{5P}+\bar F_{z_4z_5}\frac{S^3}{5P}\\
(\mathcal{R}_{-1,-1;0,-1}^{0,-1})_{12}^2{}^1_2&=\bar F\frac{\bar z_4(-\bar z_4 \bar T+\bar z_3S)}{P}-\bar F_{z_1}\frac{7Q\bar z_4}{20P}-\bar F_{z_3}\frac{7\bar z_4}{20}\\
(\mathcal{R}_{-1,-1;0,-1}^{0,-1})_{22}^2{}^1_2&=\bar F\frac{\bar z_4^2}{\sqrt{P}}-\bar F_{z_1}\frac{\bar z_4(QT+UP)}{5\sqrt{P^3}}-\bar F_{z_3}\frac{2T\bar z_4}{5\sqrt{P}}-\bar F_{z_4}\frac{\bar z_4S}{5\sqrt{P}}\\
&-\bar F_{z_1z_1}\frac{TQU}{20\sqrt{P^3}}-\bar F_{z_1z_3}\frac{(QT+UP)T}{20\sqrt{P^3}}-\bar F_{z_3z_3}\frac{T^2}{20\sqrt{P}}\\
&-\bar F_{z_3z_5}\frac{S^2}{20\sqrt{P}}-\bar F_{z_1z_4}\frac{TQS}{20\sqrt{P^3}}-\bar F_{z_3z_4}\frac{TS}{20\sqrt{P}}-\bar F_{z_1z_5}\frac{QS^2}{20\sqrt{P^3}}\\
(\mathcal{R}_{-1,-1;0,-1}^{0,-1})_{12}^2{}^2_2&=F_{\bar z_1}\frac{\bar QS}{10\sqrt{P^3}}+F_{\bar z_3}\frac{S}{10\sqrt{P}}\\
(\mathcal{R}_{-1,-1;0,-1}^{0,-1})_{22}^2{}^2_2&=\bar F_{z_1}\frac{Q\bar z_4}{10P}+\bar F_{z_3}\frac{\bar z_4}{10}\\
 \end{align*}
 
We can check that these CR geometries are flat if and only if $F=0$.
 
\appendix
 
\section{Five--dimensional uniformly 2-nondegenerate submanifolds in $\C^3$}

The case $\frak{sp}(4,\mathbb{R}),\Sigma_1=\{1\}$ is well-studied and it is known that there are two fundamental invariants $\mathcal{W}$ and $\mathcal{J}$, see \cite{Poc13,Gr19}. The results in previous Sections allow us to construct (locally) all corresponding CR geometries.

We use the defining equation

\begin{align*}
\Re(w)&=\frac { { \bar z_2}\,{{ z_1}}^{2}+{{ \bar z_1}}^{2}{ z_2
}+2\,{ \bar z_1}\,{ z_1} }{2( \bar z_2z_2-1)}\\
\end{align*}

and we pick the following complex frame of $\mathcal{D}/\mathcal{K}$:

\begin{align*}
\mathcal{L}_1&:=\frac{\partial}{\partial z_1}+i\frac{\partial}{\partial \Im(w)}\frac{z_1\bar z_2+\bar z_1}{\bar z_2z_2-1}
\end{align*}

and the following complex frame of $\mathcal{K}$:

\begin{align*}
\mathcal{K}^{\bar 1}_1&:=-\frac{\partial}{\partial z_1}(z_1\bar z_2+\bar z_1)-\frac{\partial}{\partial z_2}(\bar z_2z_2-1)-i\frac{\partial}{\partial \Im(w)}\frac{(z_1\bar z_2+\bar z_1)^2}{2(\bar z_2z_2-1)}
\end{align*}

It is not hard to verify that $L_1$ and $\bar L_1$ generate graded Lie algebra isomorphic to $\fg_-$ of a contact distribution and that $K^1_1$ maps $\bar L_1$ on $L_1$ (modulo Levi kernel).

We construct complexification of the pullback $s^*\si^*\om$, where $s: M\to \ba_0$ is the (local) section provided by our choice of the frame. We use the notation from the proof of Theorem \ref{fin2} and assume that $s^*\theta_{-2,-1},s^*\theta_{-1,-1},s^*\theta_{0,-1}$ is the dual coframe to our frame. As in \cite[Section 4.1]{Gr19}, we firstly construct the infinitesimal $\fg_0$--structure, i.e., determine the image of $s^*\theta_{0,-1}$ in $\fg_{0,0}$ by vanishing of the components $s^*R_{i,j;k,l}^{m,n}$ of homogeneity $0,1$ w.r.t. the bigrading
\begin{gather*}
\left[ \begin{array}{cccc}
W&0&0&0\\
s^*\theta_{-1,0}&V&s^*\theta_{0,1}& *\\
s^*\theta_{-1,-1}&s^*\theta_{0,-1}&* & *\\
s^*\theta_{-2,-1}& *& *& *
\end{array}\right],\\
W:=-\frac{\bar z_2}{2}k^{\bar 1}_1-\frac{z_2}{2}\bar k^1_{\bar 1}\\
V:=-\frac{\bar z_2}{2}k^{\bar 1}_1+\frac{z_2}{2}\bar k^1_{\bar 1}\\
\end{gather*}
where $k^{\bar 1}_1$ is dual to $\mathcal{K}^{\bar 1}_1$.

Now, we change the complex structure by taking $$\mathcal{L}_1+G^{\bar 1}_{11}\mathcal{\bar K}^1_{\bar 1}$$ as a complex frame of $\mathcal{D}/\mathcal{K}$. The integrability of CR geometry is given by the equation $$K^{\bar 1}_1(G^{\bar 1}_{11})=\mathcal{K}^{\bar 1}_1(G^{\bar 1}_{11})-2\bar z_2G^{\bar 1}_{11}=0$$ for which the solver of differential equations in Maple software provides general solution
$$G^{\bar 1}_{11}=\frac{F(\bar z_1,\bar z_2,\frac{\bar z_2 z_1+\bar z_1}{\bar z_2 z_2-1},i\frac{z_1(\bar z_2 z_1+\bar z_1)}{\bar z_2 z_2-1}-2v)}{(\bar z_2 z_2-1)^2}$$
depending on one function of four variables.

We know from \cite[Section 4.3]{Gr19} that we need to normalize the components $$\mathcal{R}_{i,j;k,l}^{m,n}:=s^*R_{i,j;k,l}^{m,n}$$ in homogeneity $1$ and $2$ w.r.t. the grading given by $\Sigma_1$ in order to obtain the fundamental invariants $$\mathcal{W}=-\frac{1}{3}\mathcal{R}_{-1,-1;0,-1}^{0,-1}$$ and $$\mathcal{J}=\frac{1}{12}\mathcal{R}_{-2,-1;-1,-1}^{0,1}.$$ Following the normalization in \cite[Section 4.3]{Gr19}, we obtain

\begin{gather*}
s^*\theta_{-1,-1}\circ \left[ \begin{array}{cccc}
0&\om^{-2,-1}_{0,0}& \om^{-1,-1}_{1,1}&?\\
0&\om^{-1,0}_{0,1}&-G^{\bar 1}_{11}& *\\
1& \om^{-1,-1}_{0,-1}&* & *\\
0& 0& 0& *
\end{array}\right]+s^*\theta_{-1,0}\circ \left[ \begin{array}{cccc}
0&\om^{-1,0}_{1,0}& \om^{-2,-1}_{0,0}&?\\
1&- \om^{-1,-1}_{0,-1}&\om^{-1,0}_{0,1}& *\\
0& -\bar G^1_{\bar 1 \bar 1}&* & *\\
0& 0& 0& *
\end{array}\right]\\
+s^*\theta_{-2,-1}\circ \left[ \begin{array}{cccc}
0&?&?&?\\
0&\om^{-2,-1}_{0,0}&-\om^{-1,-1}_{1,1}& *\\
0&-\om^{-1,0}_{1,0}&* & *\\
1& 0& 0& 0
\end{array}\right]+\left[ \begin{array}{cccc}
W&0&0&0\\
0&V&s^*\theta_{0,1}& 0\\
0&s^*\theta_{0,-1}&* & 0\\
0&0& 0& *
\end{array}\right],\\
\end{gather*}
where $?$ are components in higher homogeneity that we do not need to determine, $\om^{-1,0}_{0,1},\om^{-1,0}_{1,0}$ are conjugated to $\om^{-1,-1}_{0,-1}, \om^{-1,-1}_{1,1}$ and
\begin{align*}
\om^{-1,-1}_{0,-1}&=-\frac{K^{\bar 1}_1(\bar G^{1}_{\bar 1\bar 1})}{3},\\
\om^{-1,-1}_{1,1}&=G^{\bar 1}_{11}\frac{\bar K^1_{\bar 1}(\bar K^1_{\bar 1}(G^{\bar 1}_{11}))}{12}-\frac{\bar K^1_{\bar 1}(G^{\bar 1}_{11})^2}{18}-\frac{\mathcal{\bar L}_{\bar 1}(G^{\bar 1}_{11})}{4}+\frac{\mathcal{L}_1(\bar K^1_{\bar 1}(G^{\bar 1}_{11}))}{12}\\
\om^{-2,-1}_{0,0}&=-\frac{G^{\bar 1}_{11}\bar G^{1}_{\bar 1\bar 1}}{4}-\frac{{\bar K}^1_{\bar 1}({\bar K}^1_{\bar 1}({\bar K}^1_{\bar 1}(G^{\bar 1}_{11})))G^{\bar 1}_{11}+{K}^{\bar 1}_1({K}^{\bar 1}_1({K}^{\bar 1}_1(\bar G^{1}_{\bar 1\bar 1})))\bar G^{1}_{\bar 1\bar 1}}{24}\\
&-\frac{\mathcal{L}_1({\bar K}^1_{\bar 1}({\bar K}^1_{\bar 1}(G^{\bar 1}_{11})))+\mathcal{\bar L}_{\bar 1}({K}^{\bar 1}_1({K}^{\bar 1}_1(\bar G^{1}_{\bar 1\bar 1})))}{24}-\frac{{K}^{\bar 1}_1(\bar G^{1}_{\bar 1\bar 1}){\bar K}^1_{\bar 1}(G^{\bar 1}_{11})}{12}\\
&-\frac{{K}^{\bar 1}_1(\bar G^{1}_{\bar 1\bar 1}){K}^{\bar 1}_1({K}^{\bar 1}_1(\bar G^{1}_{\bar 1\bar 1}))+{\bar K}^1_{\bar 1}(G^{\bar 1}_{11}){\bar K}^1_{\bar 1}({\bar K}^1_{\bar 1}(G^{\bar 1}_{11}))}{72},\\
\end{align*}
where $\bar K^1_{\bar 1}(f)=\mathcal{K}^{\bar 1}_1(f)+a(f)z_2f$ and $a(f)$ are the integers representing the action of $W(\mathcal{K}^{\bar 1}_1),V(\mathcal{K}^{\bar 1}_1)$ in $\fg_{0,0}$ on the $\fg_{0,0}$--module in which $f$ takes values. For example, $a(G^{\bar 1}_{11})=1$ or $a(\bar K^1_{\bar 1}(G^{\bar 1}_{11}))=0$.

This provides

\begin{align*}
\mathcal{W}&=-\frac{1}{3}R_{-1,-1;0,-1}^{0,-1}=-\frac{2\bar K^1_{\bar 1}(G^{\bar 1}_{11})}{9}-\frac{K^{\bar 1}_1(K^{\bar 1}_1(\bar G^{1}_{\bar 1\bar 1}))}{9}\\
&=-F\frac{2z_2}{3(\bar z_2 z_2-1)^2}+F_{1}\frac{2(z_2 \bar z_1+z_1)}{9(\bar z_2 z_2-1)^2}+F_{2}\frac{2(z_2 \bar z_1+z_1)}{9(\bar z_2 z_2-1)}\\
&+iF_{4}\frac{2(z_2 \bar z_1+z_1)^2}{9(\bar z_2 z_2-1)}-\bar F\frac{2\bar z_2^2}{3(\bar z_2 z_2-1)^2}+\bar F_{1}\frac{4\bar z_2(\bar z_2 z_1+\bar z_1)}{9(\bar z_2 z_2-1)^2}+\bar F_{2}\frac{4\bar z_2}{9(\bar z_2 z_2-1)}\\
&-i\bar F_{4}\frac{4\bar z_2(\bar z_2 z_1+\bar z_1)^2}{9(\bar z_2 z_2-1)}
-\bar F_{1,1}\frac{(\bar z_2 z_1+\bar z_1)^2}{9(\bar z_2 z_2-1)^2}
-\bar F_{1,2}\frac{2(\bar z_2 z_1+\bar z_1)}{9(\bar z_2 z_2-1)}
+i\bar F_{1,4}\frac{2(\bar z_2 z_1+\bar z_1)^3}{9(\bar z_2 z_2-1)^3}
\\&-\bar F_{2,2}\frac{1}{9}
+i\bar F_{2,4}\frac{2(\bar z_2 z_1+\bar z_1)^2}{9(\bar z_2 z_2-1)^2}
+\bar F_{4,4}\frac{(\bar z_2 z_1+\bar z_1)^4}{9(\bar z_2 z_2-1)^4}\\
\mathcal{J}&=\frac{1}{12}R_{-2,-1;-1,-1}^{0,1}=-{\frac {1}{144}}\,{ G^{\bar 1}_{11}}\,{\bar K^1_{\bar 1}(G^{\bar 1}_{11})}\,{\bar K^1_{\bar 1}(\bar K^1_{\bar 1}(G^{\bar 1}_{11}))}-\frac{1}{12}\,{
[\mathcal{L}_1,\mathcal{\bar L}_{\bar 1}](G^{\bar 1}_{11})}\\
&+{\frac {1}{324}}\,{{\bar K^1_{\bar 1}(G^{\bar 1}_{11})}}^{3}+{\frac {1}{144}}\,{
\mathcal{L}_1(\mathcal{L}_1(\bar K^1_{\bar 1}(G^{\bar 1}_{11})))}-\frac{1}{48}\,{ \mathcal{L}_1(\mathcal{\bar L}_{\bar 1}(G^{\bar 1}_{11}))}\\
&+{\frac {1}{144}}\,{
\mathcal{L}_1(G^{\bar 1}_{11})}\,{\bar K^1_{\bar 1}(\bar K^1_{\bar 1}(G^{\bar 1}_{11}))}+{\frac {1}{144}}\,{ 
\bar K^1_{\bar 1}(\bar K^1_{\bar 1}(\bar K^1_{\bar 1}(G^{\bar 1}_{11})))}\,{({ G^{\bar 1}_{11}})}^{2}\\
&+{\frac {1}{72}}\,{ G^{\bar 1}_{11}}\,{ 
\mathcal{L}_1(\bar K^1_{\bar 1}(\bar K^1_{\bar 1}(G^{\bar 1}_{11})))}+{\frac {1}{72}}\,{ \bar K^1_{\bar 1}(G^{\bar 1}_{11})}\,{\mathcal{\bar L}_{\bar 1}(G^{\bar 1}_{11})}-{
\frac {1}{72}}\,{\bar K^1_{\bar 1}(G^{\bar 1}_{11})}\,{ \mathcal{L}_1(\bar K^1_{\bar 1}(G^{\bar 1}_{11}))}\\
&-{\frac {1}{72}}\,{
 G^{\bar 1}_{11}}\,{\mathcal{\bar L}_{\bar 1}(\bar K^1_{\bar 1}(G^{\bar 1}_{11}))}.
\end{align*}
The formula for $\mathcal{J}$ in terms of the function $F$ is too long to be presented here.

\end{document}